\documentclass[a4paper,11pt, leqno]{article}

\usepackage{latexsym,enumerate,graphicx}
\usepackage{amsmath,amsthm,amsopn,amstext,amscd,amsfonts,amssymb}
\usepackage[T1]{fontenc}
\usepackage{fullpage,yhmath}
\usepackage{hyperref}
\usepackage{eucal}
\usepackage{color}
\usepackage{verbatim}
\usepackage[normalem]{ulem}
\usepackage{frcursive}

\newenvironment{frcseries}{\fontfamily{frc}\selectfont}{}

\numberwithin{equation}{section}

\newtheorem{theorem}{Theorem}

\newtheorem{proposition}{Proposition}
\newtheorem{remark}{Remark}

\numberwithin{theorem}{section}
\numberwithin{corollary}{section}
\numberwithin{lemma}{section}
\numberwithin{definition}{section}
\numberwithin{proposition}{section}
\numberwithin{remark}{section}

\newcommand{\RR}{\mathbb R^N}

\newcommand{\R}{\mathbb R}

\newcommand{\medint}{-\kern  -,375cm\int}
\newcommand{\dint}{\displaystyle\int}

\newcommand{\di}{\mathop{\mathrm{d}\!}}

\newcommand{\dive}{\mathop{\mathrm{div}}}

\makeatletter
\@namedef{subjclassname@2020}{  \textup{2020} Mathematics Subject Classification}
\makeatother
\date{}

\begin{document}
\title{Kohler-Jobin inequality for $p$-Laplace operator}

\author{Francesco Chiacchio$^1$, Vincenzo Ferone$^{1,3}$, Anna Mercaldo$^1$, Jing Wang$^2$}
\footnotetext[1]
{{Dipartimento di Matematica e Applicazioni ``Renato Caccioppoli'', Universit\`a degli Studi di Napoli Federico II, Via Cintia, Complesso Universitario Monte S. Angelo, 80143 Napoli, Italy.}}

%
\footnotetext[2]
{{Department of Mathematics, Purdue University,
West Lafayette, Indiana, 47907, USA.}}
\footnotetext[3]
{{Corresponding author: ferone@unina.it}}
\maketitle

\setcounter{tocdepth}{1}

\begin{abstract}
A sharp lower bound for the first Dirichlet eigenvalue of the $p$-laplacian is derived for sets with prescribed $p$-torsional rigidity. The result provides an extension of the classical spectral inequality due to Kohler-Jobin. The proof is based on a careful analysis of the generalized $p$-torsional rigidity and on a sharp mass comparison result.

\vskip 0.5cm
\noindent\textit{Keywords:} Symmetrization, Kohler-Jobin inequality, reverse H\"older inequality.

\noindent\textit{MSC 2020: \textup{35P15, 47J10, 35J92.}}
\end{abstract}


\section{Introduction}\label{intr}

Given an open set $\Omega\subset\R^N$ with finite measure, for $p>1$ we consider the following
eigenvalue problem
\begin{equation} \label{eq.0}
\left\{
\begin{array}
[c]{lll}%
 -\Delta_pu=\lambda |u|^{p-2}u & & \text{in }%
\Omega,\\
\\
u=0 & & \text{on }\partial\Omega,
\end{array}
\right. %
\end{equation}
where, as usual $\Delta_pu=\dive(|Du|^{p-2}Du)$ denotes the $p$-Laplacian.
It is well known that the first eigenvalue $\lambda_1(\Omega)$ has the variational characterization
\begin{equation}\label{eigen}
\lambda_1(\Omega)=\min_{w\in W_{0}^{1,p} ( \Omega)\backslash\{0\}}
\dfrac{\displaystyle\int_\Omega|Dw|^p\, \di x}{\displaystyle\int_\Omega |w|^p\, \di x}.
\end{equation}
Moreover, there exists a positive eigenfunction $u_1$ associated with $\lambda_1(\Omega)$ that attains the minimum in \eqref{eigen}.

A second quantity naturally associated with $\Omega$ is the
  $p$-torsional
    rigidity
\begin{equation}\label{TT}
T(\Omega)=\max_{w\in W_{0}^{1,p} ( \Omega )\backslash\{0\}}\dfrac{\displaystyle\left(\int_\Omega w\, \di x\right)^p}{\displaystyle\int_\Omega|Dw|^p\, \di x}.
\end{equation}
It is known that its maximum is attained at $w=v$, where $v$ is the torsion function that solves the following boundary value problem
\begin{equation} \label{ve}
\left\{
\begin{array}
[c]{lll}%
-\Delta_p v=1& & \text{in }%
\Omega,\\
\\
v=0 & & \text{on }\partial\Omega,
\end{array}
\right.
\end{equation}
and it follows that 
\begin{equation}\label{TQ}
T(\Omega)=\left(\int_\Omega v(x)\, \di x\right)^{p-1}.
\end{equation}

The extremal sets for the principal frequency and the torsional rigidity have been extensively studied in the literature. 
When $p=2$, these quantities are closely related to classical isoperimetric-type inequalities.
Among sets of given measure, the ball minimizes $\lambda_1(\Omega)$, as stated in the Lord Rayleigh conjecture and proven by Faber and Krahn (\cite{Fa,Kr}), and maximizes $T(\Omega)$, as stated in the Saint-Venant conjecture and proven by P\'olya (\cite{Po}). 
Moreover, in \cite{PS} P\'olya and Szeg\H o stated the stronger conjecture that among sets with fixed torsional rigidity, the ball minimizes the principal frequency. This conjecture was firstly proved by Kohler-Jobin in \cite{KMCb,KMZa} using a new rearrangement technique known as \emph{transplantation \`a integrales de Dirichlet \'egales}. 
For $p>1$, a nonlinear version of Kohler-Jobin inequality was established in \cite{Br} using a similar approach. More precisely, the following isoperimetric inequality has been proved:
\begin{equation}\label{ineqKJ}
T(\Omega)^{\frac p{Np+p-N}} \lambda_1(\Omega) \ge T(B)^{\frac p{Np+p-N}} \lambda_1(B),
\end{equation}
where $B$ is any ball.

Let us observe that inequality \eqref{ineqKJ} can be equivalently stated by saying that for any fixed $\Omega\subset\R^N$, if $B$ is the ball such that $T(B)=T(\Omega)$, then
\begin{equation}\label{ineKJ}
\lambda_1(\Omega)\ge\lambda_1(B).
\end{equation}
However, the proof in \cite{KMCb}, \cite{KMZa}, \cite{Br} relies on the quoted method introduced in \cite{KMCb}. In the present paper we intend to show that \eqref{ineqKJ} can be proved for a more general notion of torsional rigidity making use of standard rearrangement arguments. Indeed, the torsional rigidity $T(\Omega)$ can be seen as a particular case of a ``generalized torsional rigidity'', firstly introduced in \cite{Ba} in the case  $p=2$, defined, for $\alpha \in \R$, as
\begin{equation}
    \label{torgen_max_intro}
Q_p(\alpha, \Omega)=
\sup_{w\in 
W_{0}^{1,p} ( \Omega)}
\left\{-\int_\Omega|Dw(x)|^p\, \di x+\alpha\int_\Omega |w(x)|^p\, \di x+p\int_\Omega w(x)\, \di x\right\}.
\end{equation}
For any $\alpha \in (-\infty, \lambda_1(\Omega))$,
the maximum in \eqref{torgen_max_intro} is attained at the generalized torsion function $v$, which solves the problem
\begin{equation}
    \label{ve_intro}
\left\{
\begin{array}
[c]{lll}%
-\Delta_pv=\alpha v^{p-1}+1& & \text{in }%
\Omega,\\
\\
v=0 & & \text{on }\partial\Omega.
\end{array}
\right. 
\end{equation}
In particular, when $\alpha=0$, the generalized torsional rigidity reduces to the $p$-torsional rigidity, that is,
\begin{equation}\label{TQa}
T(\Omega)=\left(\dfrac{Q_p(0,\Omega)}{p-1}\right)^{p-1}.
\end{equation}

In this paper we will prove that, for any $\alpha \in (-\infty, \lambda_1(\Omega))$ and for any  set $\Omega\subset\RR$ with finite measure, it holds
\begin{equation}\label{tor-alpha}
\lambda_1(\Omega) \geq \lambda_1(B_{\alpha})\quad\text{where}\ \ B_{\alpha}\ \ \text{is a ball s.t.} \ Q_p(\alpha, B_{\alpha})=Q_p(\alpha, \Omega).
\end{equation}

Clearly, when $\alpha = 0$, the above statement implies \eqref{ineKJ}, and therefore recovers the Kohler--Jobin inequality. Our result can be viewed as a one-parameter extension of this inequality, in which the usual torsional rigidity is replaced by the generalized torsional rigidity $Q_p(\alpha,\Omega)$.

The proof is based on the monotonicity, with respect to $\alpha$, of the radius $r(\alpha)$ of the ball determined by
\[
Q_p(\alpha,\Omega)=Q_p(\alpha,B_{r(\alpha)}).
\]
More precisely, we show that the map $\alpha\mapsto r(\alpha)$ is decreasing. This monotonicity follows from a sharp mass comparison result between the generalized torsion function in $\Omega$ and the corresponding radial solution in the ball $B_{r(\alpha)}$. The analysis also requires a careful study of the dependence of $Q_p(\alpha,\Omega)$ on both the parameter $\alpha$  and the domain $\Omega$, including a differentiation formula 
with respect to $\alpha$.

Let us observe that related ideas appear in the classical papers of Kohler--Jobin \cite{KMCa,KMZa,KMZb}, where the usual Kohler--Jobin inequality is obtained by different techniques. 

In contrast, the present approach relies only on standard rearrangement arguments and on the variational structure of the generalized torsional rigidity. Since the final inequality is obtained through a limiting argument as $\alpha\to\lambda_1(\Omega)^-$, this method does not seem to yield a characterization of the equality case.
The paper is structured as follows. 
In Section \ref{s1}, we introduce some notation and we collect some preliminary results about the first eigenvalue and the torsional rigidity defined in \eqref{eigen} and \eqref{TT}, respectively. In Section \ref{s2} we prove some properties of the generalized torsional rigidity defined in \eqref{torgen_max_intro},
both in a general domain and in balls.
In Section \ref{s3} we prove the comparison result mentioned above, while in Section \ref{s4} we prove the main result.

\section{Preliminary results}\label{s1}
This section is devoted to classical symmetrization tools and to some basic properties of the first eigenvalue and of the $p$-torsional rigidity in the Euclidean setting.

\subsection{Schwarz symmetrization}

Let $u$ be a measurable function defined on a measurable set $\Omega\subset\R^N$ of finite measure. The distribution function of $u$ is the map from $[0,+\infty)$ into $[0,|\Omega|]$ defined by
\[
\mu(t)=|\{x\in\Omega:\ |u(x)|>t\}|.
\]
The function $\mu$ is non-increasing and right-continuous. The decreasing rearrangement of $u$ is defined by
\begin{equation}\label{riord}
u^*(s)=\inf\{t\ge0:\mu(t)\le s\},\qquad 0<s\le |\Omega|.
\end{equation}
Let $\Omega^\sharp$ be the ball centered at
the origin having the same measure as $\Omega$, namely
\[
\Omega^\sharp = B_R := B_R(0), \quad \text{where } R>0 \text{ satisfies } \omega_N R^N = |\Omega|.
\]
Here and in the sequel, $\omega_N$ stands for the measure of the unit ball in $\mathbb{R}^N$.

The Schwarz symmetrization $u^\sharp$ of $u$ is the radial
and radially non-increasing function  defined as follows

\[
u^\sharp(x) = u^*(\omega_N|x|^N), \quad \text{with } 
x \in \Omega^\sharp .
\]
By construction $u$ and $u^\sharp$ are equimeasurable, and therefore, for every $1\le p\le\infty$, it holds that
\begin{equation}\label{inv}
\|u\|_{L^p(\Omega)}=\|u^\sharp\|_{L^p(\Omega^\sharp)} .
\end{equation}
We shall use the classical P\'olya-Szeg\H{o} 
principle (see, e.g., \cite{BZ}) 
\begin{theorem}\label{polya_szego}
Let $p>1$ and let $u\in W_0^{1,p}(\Omega)$. Then $u^\sharp\in W_0^{1,p}(\Omega^\sharp)$ and
\[
\int_\Omega |Du|^p\,\di x\ge \int_{\Omega^\sharp}|Du^\sharp|^p\,\di x.
\]
\end{theorem}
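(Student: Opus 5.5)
The plan is the standard coarea/isoperimetric argument for the P\'olya--Szeg\H{o} principle. I first reduce to nonnegative functions: since $|Du|=|D|u||$ a.e., $|u|\in W_0^{1,p}(\Omega)$ has the same Dirichlet energy, and $u^\sharp$ depends only on $|u|$. By density and lower semicontinuity I may further assume that $u$ is nonnegative, smooth (or Lipschitz) with compact support in $\Omega$. The reason this is allowed is that the map $u\mapsto u^\sharp$ is a contraction in $L^p$. So if $u_n\to u$ in $W^{1,p}_0$, then $u_n^\sharp\to u^\sharp$ in $L^p$. If the $u_n^\sharp$ have uniformly bounded energies, weak lower semicontinuity of the $L^p$ norm of the gradient gives $u^\sharp\in W_0^{1,p}(\Omega^\sharp)$ and the inequality passes to the limit.

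For such a regular $u$, set $\mu(t)=|\{u>t\}|$ and $P(t)=\mathcal H^{N-1}(\{u=t\})$. The coarea formula gives
\[
\int_\Omega |Du|^p\,\di x=\int_0^{\infty}\int_{\{u=t\}}|Du|^{p-1}\,\di\mathcal H^{N-1}\,\di t,
\qquad
-\mu'(t)\ge\int_{\{u=t\}}\frac{1}{|Du|}\,\di\mathcal H^{N-1}
\]
for a.e.\ $t$. H\"older's inequality on the level set then yields
\[
P(t)\le\Big(\int_{\{u=t\}}|Du|^{p-1}\Big)^{1/p}\Big(\int_{\{u=t\}}|Du|^{-1}\Big)^{(p-1)/p}.
\]
Combined with the previous bound this gives
\[
\int_{\{u=t\}}|Du|^{p-1}\ge P(t)^p(-\mu'(t))^{1-p}.
\]
Next I apply the isoperimetric inequality $P(t)\ge N\omega_N^{1/N}\mu(t)^{(N-1)/N}$. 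This gives
\[
\int_\Omega|Du|^p\ge\int_0^\infty \big(N\omega_N^{1/N}\mu(t)^{(N-1)/N}\big)^p(-\mu'(t))^{1-p}\,\di t .
\]
For $u^\sharp$ every step of this chain is an equality. Its level sets are spheres, so the isoperimetric inequality is sharp. The gradient is constant on each level set, so H\"older is sharp. And $\mu$ coincides with the distribution function of $u^\sharp$. Hence the right-hand side equals $\int_{\Omega^\sharp}|Du^\sharp|^p$.

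The main obstacle is making the last equality rigorous. Equimeasurability only transfers $\mu$, not regularity, so one must show that $u^\sharp$ is locally absolutely continuous in $r$ and that its energy is given exactly by the displayed formula. The difficulty comes from plateaus of $u$, where $\{|Du|=0\}$ has positive measure and $\mu$ jumps, and from the singular part of $\mu$.

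To handle this, I would first prove the inequality for $u$ piecewise linear (or Morse-type), for which $\mu$ is absolutely continuous with the explicit derivative above. In that case $u^*$ is absolutely continuous, and a direct computation gives $|Du^\sharp|=-(u^*)'(\omega_N r^N)\,N\omega_N r^{N-1}$. This shows $u^\sharp\in W^{1,p}_0(\Omega^\sharp)$ with the claimed energy bound. The general case then follows from the density and lower semicontinuity step described above.

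Alternatively, one could prove the inequality through two-point (polarization) rearrangements, which preserve the Dirichlet energy exactly, together with approximation of $u^\sharp$ by iterated polarizations, as in the reference \cite{BZ} quoted before the statement.
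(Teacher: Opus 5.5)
The paper gives no proof of this statement: it quotes the classical P\'olya--Szeg\H{o} principle from \cite{BZ}. Your argument is the classical coarea/isoperimetric proof and is correct in outline. Its steps are: the coarea formula, H\"older on level sets, the isoperimetric inequality, equality in the chain for $u^\sharp$, and then density via the $L^p$-nonexpansivity of $u\mapsto u^\sharp$ plus weak lower semicontinuity. This is essentially the route of \cite{BZ}, which also works through coarea and isoperimetry.

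The one real difference is how you obtain the identity
\[
\int_{\Omega^\sharp}|Du^\sharp|^p\,dx=\int_0^\infty\bigl(N\omega_N^{1/N}\mu(t)^{1-1/N}\bigr)^p(-\mu'(t))^{1-p}\,dt .
\]
You restrict to a dense class on which $\mu$ is absolutely continuous. The standard treatment instead handles every Lipschitz $u$. It shows that $u^\sharp$ is again Lipschitz, and that for the radial function $-\mu'(t)=\int_{\{u^\sharp=t\}}|Du^\sharp|^{-1}\,d\mathcal H^{N-1}$ for a.e.\ $t$. This rests on a one-dimensional fact: an absolutely continuous function maps the zero set of its derivative onto a null set. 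For $u$ itself the inequality $-\mu'\ge\int_{\{u=t\}}|Du|^{-1}$ may be strict, which is harmless. Your shortcut is more elementary and suffices for the inequality. The general route gives more information and is what the equality-case analysis needs.

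Two corrections to your write-up:

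\begin{itemize}
\item A general piecewise linear function can be constant on simplices where $u>0$, and then $\mu$ jumps. So you must take piecewise linear functions whose gradient is nonzero on every simplex meeting $\{u>0\}$; these are still dense, by perturbing nodal values. For such $u$, continuity and the intermediate value theorem make $\mu$ strictly decreasing on $[0,\max u)$. The coarea formula makes $\mu$ absolutely continuous there with $-\mu'(t)=\int_{\{u=t\}}|Du|^{-1}\,d\mathcal H^{N-1}\ge P(t)/\|Du\|_\infty>0$ a.e. This is exactly what makes $u^*$ absolutely continuous and justifies your change of variables $s=\mu(t)$.
\item The polarization proof you mention at the end is a different argument (Baernstein--Taylor, Brock--Solynin), not the one in \cite{BZ}.
\end{itemize}
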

We also recall the Hardy-Littlewood inequality
\begin{equation}\label{Har}
\int_\Omega |f(x)g(x)|\,\di x\le \int_{\Omega^\sharp} f^\sharp(x)g^\sharp(x)\,\di x = \int_0^{|\Omega|}f^*(s)g^*(s)\,\di s
\end{equation}
and the following characterization (cf. \cite{ChongRice}).

\begin{proposition}\label{relazione}
Let $f,g\in L_+^1(\Omega)$. Then the following statements are equivalent:
\begin{equation}
\int_0^t f^*(s)\,\di s\le \int_0^t g^*(s)\,\di s\qquad \forall t\in[0,|\Omega|],
\end{equation}
\begin{equation}
\int_\Omega F(f)\,\di x\le \int_\Omega F(g)\,\di x
\end{equation}
for every convex, nonnegative, Lipschitz continuous function $F$ such that $F(0)=0$.
\end{proposition}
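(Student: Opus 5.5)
The statement immediately above is Proposition \ref{relazione}, the classical characterization of the majorization order $\int_0^t f^*\le\int_0^t g^*$ (see \cite{ChongRice}). I would prove the two implications separately, reducing both to one-dimensional statements about decreasing rearrangements on $(0,|\Omega|)$. By equimeasurability \eqref{inv}, applied to $F(f)$ (whose distribution is determined by that of $f$, since $F$ is nondecreasing on $[0,\infty)$), we have $\int_\Omega F(f)\,\di x=\int_0^{|\Omega|}F(f^*(s))\,\di s$. Here monotonicity follows because $F$ is convex, nonnegative and $F(0)=0$, so $F$ is nondecreasing on $[0,\infty)$. Everything can therefore be phrased in terms of $f^*,g^*$.

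\emph{Second statement $\Rightarrow$ first.} Fix $t$ and use the test functions $F_\tau(r)=(r-\tau)_+$, which are convex, nonnegative, Lipschitz and vanish at $0$ for $\tau\ge0$. For any $\tau\ge0$,
\[
\int_0^t g^*(s)\,\di s\ \ge\ \int_0^t \bigl(g^*(s)-\tau\bigr)\,\di s\ \ge\ t\tau-\int_0^{|\Omega|}\!\!\!\!\!\!\!\!\!\!\!\!\!\!\!\!\!\!\quad\ \ \,
\]
is the wrong direction, so instead I would use the identity
\[
\int_0^t h^*(s)\,\di s=\min_{\tau\ge0}\Bigl\{t\tau+\int_0^{|\Omega|}(h^*(s)-\tau)_+\,\di s\Bigr\},
\]
whose minimum is attained at $\tau=h^*(t)$. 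Apply this with $h=g$, choose $\tau=g^*(t)$, and use the hypothesis $\int (f^*-\tau)_+\le\int(g^*-\tau)_+$. This gives $\int_0^t f^*\le t\tau+\int(f^*-\tau)_+\le t\tau+\int(g^*-\tau)_+=\int_0^t g^*$.

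\emph{First statement $\Rightarrow$ second.} This is the main step, a Hardy--Littlewood--P\'olya type argument. First I would reduce to the functions $F_\tau$. Any convex Lipschitz $F$ with $F(0)=0$, nondecreasing on $[0,\infty)$, can be written as $F(r)=F'(0^+)r+\int_0^\infty (r-\tau)_+\,\di F'(\tau)$ with $\di F'\ge0$. It then suffices to prove the inequality for $r\mapsto r$ and for each $(r-\tau)_+$, and conclude by Fubini/monotone convergence. For $(r-\tau)_+$, the identity above gives, for every $t$,
\[
\int_0^{|\Omega|}(f^*-\tau)_+\,\di s=\max_{t}\Bigl\{\int_0^t f^*\,\di s-t\tau\Bigr\}\le \max_t\Bigl\{\int_0^t g^*\,\di s-t\tau\Bigr\}=\int_0^{|\Omega|}(g^*-\tau)_+\,\di s,
\]
where the maximum is attained at $t=|\{f^*>\tau\}|$. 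The linear term follows from the case $t=|\Omega|$.

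The main obstacle I expect is justifying the integral representation of $F$. One has to check that the coefficient $F'(0^+)$ is nonnegative, which requires $F$ nondecreasing on $[0,\infty)$ and hence uses both convexity and nonnegativity, and that the boundary terms vanish. These boundary terms are controlled since $F$ is Lipschitz, so $\di F'$ is a finite measure, and $f,g\in L^1$ ensures every integral is finite.
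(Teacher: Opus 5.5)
Your proof is correct. The paper gives no proof of Proposition~\ref{relazione}; it simply quotes it from \cite{ChongRice}. So there is no in-paper argument to compare against, and yours is a complete self-contained one. The load-bearing facts all hold. Convexity together with $F\ge 0=F(0)$ gives $F(a)\le \frac ab F(b)\le F(b)$ for $0<a<b$, and $F'(0^+)\ge 0$. Since $h^*$ is nonincreasing,
\[
\int_0^t h^*\,\di s=\min_{\tau\ge 0}\Bigl\{t\tau+\int_0^{|\Omega|}(h^*-\tau)_+\,\di s\Bigr\},\qquad
\int_0^{|\Omega|}(h^*-\tau)_+\,\di s=\max_{0\le t\le |\Omega|}\Bigl\{\int_0^t h^*\,\di s-t\tau\Bigr\},
\]
with extremal points $\tau=h^*(t)$ (for $0<t\le|\Omega|$) and $t=|\{h^*>\tau\}|$ respectively. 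Finally, write $F(r)=\int_0^r F'_+$ and $F'_+(u)=F'(0^+)+\mu((0,u])$ with $\mu=\di F'_+\ge 0$ on $(0,\infty)$. Fubini then gives the angle representation directly, with no boundary terms to control.

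The classical Hardy--Littlewood--P\'olya/Karamata argument goes differently. It uses the supporting-line inequality $F(g^*)-F(f^*)\ge F'_+(f^*)\,(g^*-f^*)$ and then integrates by parts against the nonnegative, nonincreasing weight $s\mapsto F'_+(f^*(s))$. Your route replaces that Abel summation by Tonelli, and it makes two things explicit. First, the inequality for the angles $(r-\tau)_+$, $\tau\ge 0$, alone is already equivalent to the majorization; the linear term is just the case $\tau=0$. Second, the Lipschitz assumption is used only to keep the integrals finite. By Tonelli your argument gives the inequality in $[0,+\infty]$ for every convex $F\ge 0$ with $F(0)=0$, which covers directly the powers used in the proof of Theorem~\ref{comp}.

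Three cosmetic points. Delete the aborted display at the start of your ``second $\Rightarrow$ first'' paragraph, the false start you yourself call ``the wrong direction''. Take $\di F'$ on $(0,\infty)$, i.e.\ without an atom at $\tau=0$, otherwise the linear term is counted twice. Lastly, $\int_\Omega F(f)\,\di x=\int_0^{|\Omega|}F(f^*)\,\di s$ holds by equimeasurability for any Borel $F\ge 0$, so the monotonicity of $F$ is not needed at that step.
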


\subsection{First eigenvalue and torsional rigidity}

For $p>1$ and an open set $\Omega\subset\R^N$ of finite measure, we consider the first eigenvalue problem
\begin{equation}
\label{p_Lapl}
\left\{
\begin{array}
[c]{lll}%
 -\dive( |Du|^{p-2}Du)=\lambda  |u|^{p-2}u & & \text{in }%
\Omega,\\
\\
u=0 & & \text{on }\partial\Omega.
\end{array}
\right.
\end{equation}
It is well known that its first eigenvalue, $\lambda_1(\Omega)$, is the minimum of the Rayleigh quotient
\[
\lambda_1(\Omega)=\min_{w\in W_{0}^{1,p}( \Omega)\backslash\{0\}}
\frac{\|Dw\|^p_{L^p(\Omega)}}{\|w\|^p_{L^p(\Omega)}}
\]
and that the above minimization problem is equivalent to the weak form of \eqref{p_Lapl} with $\lambda = \lambda_1(\Omega)$, that is,
\begin{equation*}
\int_\Omega |\nabla u|^{p-2} \nabla u \cdot \nabla \varphi \, dx = \lambda \int_\Omega |u|^{p-2}u \varphi \, dx \quad \forall \varphi \in W^{1,p}_0(\Omega).
\end{equation*}
Furthermore, see e.g. \cite{Lin}, 
$\lambda_1(\Omega)$ is simple and  a corresponding 
eigenfunction has one sign within $\Omega$.
Combining Theorem \ref{polya_szego} with \eqref{inv} yields the Faber-Krahn inequality.

\begin{theorem}\label{KR}
Let $\Omega$ be an open set in $\R^N$ with finite measure. Then
\begin{equation}\label{FK}
\lambda_1(\Omega)\ge \lambda_1(\Omega^\sharp).
\end{equation}
\end{theorem}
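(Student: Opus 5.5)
The plan is to combine the variational characterization of $\lambda_1$ with Theorem \ref{polya_szego} and the equimeasurability property \eqref{inv}. Let $u_1\in W_0^{1,p}(\Omega)$ be a first eigenfunction, normalized so that $\|u_1\|_{L^p(\Omega)}=1$. Since $\lambda_1$ is simple and its eigenfunctions have constant sign, we may take $u_1\ge 0$. In any case the symmetrization only involves $|u_1|$, so the sign plays no essential role. Then
\[
\lambda_1(\Omega)=\int_\Omega |Du_1|^p\,\di x .
\]

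Next, consider the Schwarz symmetrization $u_1^\sharp$, which is defined on $\Omega^\sharp$. By Theorem \ref{polya_szego}, $u_1^\sharp\in W_0^{1,p}(\Omega^\sharp)$ and
\[
\int_{\Omega^\sharp}|Du_1^\sharp|^p\,\di x\le \int_\Omega|Du_1|^p\,\di x .
\]
By \eqref{inv}, $\|u_1^\sharp\|_{L^p(\Omega^\sharp)}=\|u_1\|_{L^p(\Omega)}=1$, so in particular $u_1^\sharp\neq 0$. It is therefore an admissible competitor in the Rayleigh quotient defining $\lambda_1(\Omega^\sharp)$, which gives
\[
\lambda_1(\Omega^\sharp)\le \frac{\int_{\Omega^\sharp}|Du_1^\sharp|^p\,\di x}{\int_{\Omega^\sharp}|u_1^\sharp|^p\,\di x}\le \int_\Omega|Du_1|^p\,\di x=\lambda_1(\Omega).
\]
This is exactly \eqref{FK}.

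The argument has no real obstacle once the P\'olya--Szeg\H{o} principle is available, since all the analytic difficulty sits inside that theorem. The only point that needs checking is that the minimum in \eqref{eigen} is attained. This is recalled in the paper and follows from the compact embedding $W_0^{1,p}(\Omega)\hookrightarrow L^p(\Omega)$ for sets of finite measure. Even without attainment, one could run the same argument on a minimizing sequence $w_k$, replacing $w_k$ by $|w_k|$ and then symmetrizing, and obtain the same inequality.
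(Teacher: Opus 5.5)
Your proof is correct and follows the same route as the paper. The paper only remarks that combining the P\'olya--Szeg\H{o} principle (Theorem \ref{polya_szego}) with the equimeasurability property \eqref{inv} gives the Faber--Krahn inequality; you spell this out by symmetrizing a normalized first eigenfunction and using $u_1^\sharp$ as a competitor in the Rayleigh quotient for $\lambda_1(\Omega^\sharp)$.
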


Similarly, Theorem \ref{polya_szego} implies the Saint-Venant inequality for the torsional rigidity.

\begin{theorem}\label{SV}
Let $\Omega$ be an open set in $\R^N$ with finite measure. Then
\begin{equation}\label{SVe}
T(\Omega)\le T(\Omega^\sharp).
\end{equation}
\end{theorem}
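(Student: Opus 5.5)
We take $v$ to be the $p$-torsion function of $\Omega$, the solution of \eqref{ve}, which attains the maximum in \eqref{TT}. We then test the variational characterization of $T(\Omega^\sharp)$ with its Schwarz symmetrization $v^\sharp$. By Theorem \ref{polya_szego}, $v^\sharp\in W_0^{1,p}(\Omega^\sharp)$ and $v^\sharp\not\equiv 0$, so it is an admissible competitor in the quotient defining $T(\Omega^\sharp)$. We may assume $v\ge 0$: either we use the weak maximum principle, or we replace $v$ by $|v|$, which does not decrease the numerator and leaves the denominator unchanged.

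Next we compare the numerator and the denominator separately. By the equimeasurability \eqref{inv} with exponent $1$, applied to the nonnegative function $v$,
\[
\int_\Omega v\,\di x=\int_{\Omega^\sharp} v^\sharp\,\di x .
\]
By Theorem \ref{polya_szego},
\[
\int_{\Omega^\sharp}|Dv^\sharp|^p\,\di x\le \int_\Omega |Dv|^p\,\di x .
\]
The denominator is strictly positive, since $v\neq 0$ lies in $W_0^{1,p}$. Combining the two relations gives
\[
T(\Omega)=\frac{\left(\int_\Omega v\,\di x\right)^p}{\int_\Omega|Dv|^p\,\di x}\le \frac{\left(\int_{\Omega^\sharp} v^\sharp\,\di x\right)^p}{\int_{\Omega^\sharp}|Dv^\sharp|^p\,\di x}\le T(\Omega^\sharp),
\]
which is \eqref{SVe}.

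There is no serious obstacle here. The only point needing care is that the supremum in \eqref{TT} is attained and finite. Even without knowing attainment, the same argument applies to an arbitrary $w\in W_0^{1,p}(\Omega)\setminus\{0\}$ (replacing $w$ by $|w|$), and taking the supremum over $w$ yields the inequality directly. This version bypasses the existence of the torsion function altogether.
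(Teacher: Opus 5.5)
Your proof is correct and is essentially the paper's argument: the paper obtains \eqref{SVe} directly from the P\'olya--Szeg\H{o} principle (Theorem \ref{polya_szego}) together with equimeasurability \eqref{inv}, exactly as you do by testing \eqref{TT} on $\Omega^\sharp$ with the symmetrized function. Your closing variant, which applies the argument to an arbitrary $|w|$ and then takes the supremum, is the cleanest form because it does not need the maximizer to exist.
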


We add some results concerning the properties of the first eigenvalue on a ball $B_R$.
We firstly observe that it  satisfies the scaling law
\begin{equation}
\label{rescale}
\lambda_1(B_R)=R^{-p}\lambda_1(B_1).
\end{equation}
  Furthermore Theorem \ref{KR} implies that the first eigenfunction is radial, so we have
\begin{equation}
\label{1dim}
\lambda_1(B_R)
=\min_{w\in W^{1,p}_{0}
\left(
(0,R),\rho^{N-1}
\right)\setminus\{0\}}
\frac{\dint_{0}^{R} |w'(\rho)|^{p}\rho^{N-1}\,\di\rho}
{\dint_{0}^{R} |w(\rho)|^{p}\rho^{N-1}\,\di\rho},
\end{equation}
and the first eigenfunction $u_1>0$, which achieves the minimum above, solves the problem
\begin{equation} 
\label{2eq}
\left\{
\begin{array}
[c]{lll}
-\bigl(|u_1'|^{p-2}
u_1'\rho^{N-1}\bigr)'
=
\lambda_1(B_R)u_1^{p-1}\rho^{N-1}& & \rho\in
(0,R),\\
\\
u_1'(0)=0,\qquad u_1(R)=0. & & 
\end{array}
\right.
\end{equation}
 Finally, equality \eqref{rescale}, together with a result established  in \cite{GMS} and \cite{Lam}, immediately implies the following proposition.

\begin{proposition}\label{half}
The following differentiation formula holds
\begin{equation}\label{shape}
\frac{d}{dr}\lambda_1(B_r) 
= -pr^{-p-1}\lambda_1(B_1) 
= -(p-1)N\omega_N r^{N-1} \frac{|u_1'(r)|^p}{\|u_1\|_p^p} .
\end{equation}
\end{proposition}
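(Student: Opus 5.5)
The first equality in \eqref{shape} follows by differentiating the scaling law \eqref{rescale} in $r$. The second equality is the Hadamard-type shape derivative formula of \cite{GMS,Lam}, specialized to the ball. I plan to give a self-contained check of it by a Pohozaev/Rellich-type identity on the radial problem \eqref{2eq}, which in effect rederives the cited result in this symmetric case.

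Set $\lambda=\lambda_1(B_r)$ and let $u_1$ solve \eqref{2eq} on $(0,r)$. By homogeneity of $\lambda_1$ and of the quotient in \eqref{1dim}, the right-hand side of \eqref{shape} is invariant under multiplying $u_1$ by a constant, so no normalization is needed. Since $\|u_1\|_p^p=N\omega_N\int_0^r u_1^p\rho^{N-1}\,\di\rho$, it suffices to prove the one-dimensional identity
\[
p\,\lambda\int_0^r u_1^p\rho^{N-1}\,\di\rho=(p-1)\,r^{N}|u_1'(r)|^p .
\]
Combined with $\lambda=r^{-p}\lambda_1(B_1)$, this gives $pr^{-p-1}\lambda_1(B_1)=(p-1)N\omega_N r^{N-1}|u_1'(r)|^p/\|u_1\|_p^p$, which is \eqref{shape}.

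To prove the identity, I multiply the equation in \eqref{2eq} by $\rho u_1'(\rho)$ and integrate over $(0,r)$.

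On the left-hand side, I write $\phi=|u_1'|^{p-2}u_1'$ and integrate $-(\phi\rho^{N-1})'\rho u_1'$ by parts. The boundary term is $-r^N|u_1'(r)|^p$; it vanishes at $0$ because $u_1'(0)=0$. The remaining integral is $\int_0^r \phi\rho^{N-1}(\rho u_1')'\,\di\rho$, which I expand as
\[
\int_0^r |u_1'|^p\rho^{N-1}\,\di\rho+\int_0^r \phi u_1''\rho^{N}\,\di\rho .
\]
In the second term I use $\phi u_1''=\frac1p(|u_1'|^p)'$ and integrate by parts once more. This gives $\frac1p r^N|u_1'(r)|^p-\frac Np\int_0^r|u_1'|^p\rho^{N-1}\,\di\rho$.

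On the right-hand side, $\lambda\int_0^r u_1^{p-1}u_1'\rho^N\,\di\rho=\frac{\lambda}{p}\int_0^r (u_1^p)'\rho^N\,\di\rho$. Integrating by parts and using $u_1(r)=0$, this equals $-\frac{\lambda N}{p}\int_0^r u_1^p\rho^{N-1}\,\di\rho$.

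Next I eliminate the gradient integral. Testing \eqref{2eq} with $u_1$ gives $\int_0^r|u_1'|^p\rho^{N-1}\,\di\rho=\lambda\int_0^r u_1^p\rho^{N-1}\,\di\rho$. Substituting this into both sides and collecting terms gives
\[
\Bigl(-1+\tfrac1p\Bigr)r^N|u_1'(r)|^p+\Bigl(1-\tfrac Np\Bigr)\lambda\int_0^r u_1^p\rho^{N-1}\,\di\rho=-\tfrac Np\lambda\int_0^r u_1^p\rho^{N-1}\,\di\rho .
\]
This simplifies to $\lambda\int_0^r u_1^p\rho^{N-1}\,\di\rho=\frac{p-1}{p}r^N|u_1'(r)|^p$, which is the claimed identity.

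The main obstacle is regularity. The function $u_1''$ may fail to exist where $u_1'=0$ when $p\neq2$, so the computation above cannot be taken literally everywhere. However, $u_1'<0$ on $(0,r]$: integrating \eqref{2eq} gives $|u_1'|^{p-2}u_1'\rho^{N-1}=-\lambda\int_0^\rho u_1^{p-1}s^{N-1}\,\di s<0$. The flux $\phi\rho^{N-1}$ is therefore $C^1$, and $u_1\in C^2(0,r]$ by the implicit function theorem. I will therefore carry out all integrations by parts on $(\varepsilon,r)$. The boundary terms at $\varepsilon$ are $O(\varepsilon^N|u_1'(\varepsilon)|^p)$ and $O(\varepsilon^N)$, and these tend to $0$ as $\varepsilon\to0$ because $u_1'$ is bounded near $0$ (indeed $|u_1'(\rho)|^{p-1}\le C\rho$). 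Alternatively, one may simply invoke \cite{GMS,Lam} as the paper does.
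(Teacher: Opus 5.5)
Your proof is correct, but it takes a different route from the paper. The paper obtains the second equality by combining \eqref{rescale} with the general Hadamard formula for the first $p$-Laplacian eigenvalue under domain perturbations from \cite{GMS,Lam}, specialized to dilations of the ball.

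You instead note that, for balls, differentiability in $r$ already follows from the scaling law. The second equality then reduces to the static identity $p\lambda\int_0^r u_1^p\rho^{N-1}\,\di\rho=(p-1)r^N|u_1'(r)|^p$. You derive this from a one-dimensional Pohozaev identity together with the energy identity obtained by testing with $u_1$. I checked the integrations by parts and the coefficients; they are right.

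Your argument uses the same mechanism (scaling plus Pohozaev) that the paper itself applies to $Q_p^\sharp$ in Proposition~\ref{derQ}. It therefore makes the treatment of $\lambda_1(B_r)$ parallel to that of $Q_p^\sharp(\alpha,r)$. It also removes the dependence on shape-differentiability results, which need more machinery than the radial case requires: differentiability of a simple eigenvalue along general smooth deformations, and boundary regularity of eigenfunctions.

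Your handling of the degeneracy for $p\neq2$ is also more explicit than the paper's treatment in Proposition~\ref{derQ}. The integrated form of \eqref{2eq}, using $u_1'(0)=0$ so the flux vanishes at the origin, gives $u_1'<0$ on $(0,r]$. In particular $u_1'(r)\neq0$ without Hopf's lemma, and $u_1\in C^2(0,r]$. Truncating to $(\varepsilon,r)$, with the bound $|u_1'(\rho)|^{p-1}\le C\rho$, disposes of the origin. That bound uses the standard boundedness of $u_1$.

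What the cited route buys is generality: a boundary-integral formula for arbitrary domain variations. That generality is not needed for this proposition.
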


\section{A generalized torsional rigidity}\label{s2}

Let $\Omega $ be an open  set of $ \R^N$ with finite measure. For $\alpha\in(-\infty,\lambda_1(\Omega))$ we consider the following generalization of torsional rigidity
\begin{equation}\label{QQ}
Q_p(\alpha, \Omega)=\sup_{w\in W_{0}^{1,p} ( \Omega )}\left\{-\int_\Omega|Dw(x)|^p\, \di x+\alpha\int_\Omega |w(x)|^p\, \di x+p\int_\Omega w(x)\, \di x\right\}.
\end{equation}
As we will see,
the maximum of the functional
\begin{equation}
\label{F}
F_\alpha(w):=-\int_{\Omega }|Dw(x)|^{p}\,\di x+\alpha \int_{\Omega
}|w(x)|^{p}\,\di x+p\int_{\Omega }w(x)\di x\,,
\end{equation}
is achieved just
for $w=v$, where $v$ is the 
unique weak solution (see Proposition \ref{prop} to problem
\begin{equation} \label{veq.0}
\left\{
\begin{array}
[c]{lll}%
-\Delta_pv=\alpha  |v|^{p-2}v+1 & & \text{in }%
\Omega,\\
\\
v=0 & & \text{on }\partial\Omega\,.
\end{array}
\right. 
\end{equation}
This means that
\[
 v\in W_{0}^{1,p} ( \Omega ),
 \]
 and
 \begin{equation}\label{vweak}
  \int_{\Omega} |D v|^{p-2}D vD \varphi \,\di x= \alpha \int_{\Omega}|  v|^{p-2}  v \varphi \,\di x +\int_{\Omega}  \varphi\, \di x\,, 
  \notag
 \end{equation}
 for every $\varphi \in W_0^{1,p}(\Omega)$.

\noindent An easy calculation proves that the following equality holds true:
\begin{equation}\label{Q}
Q_p(\alpha, \Omega)=(p-1)\int_\Omega v(x)\, \di x.
\end{equation}
As observed in the introduction, when $\alpha=0$, the generalized torsional rigidity reduces to the $p$-torsional rigidity in the sense that \eqref{TQa}
holds true.

We now derive some useful properties of $Q_p(\alpha, \Omega)$ which are consequences of its  definition.
\begin{proposition}\label{prop}
Let $\Omega\subset \R^N$, with finite measure. Then
\begin{enumerate}[\rm(a)]
\item\label{(a)} $\displaystyle Q_p(\alpha, \Omega)$ is finite$\quad\iff\quad -\infty<\alpha<\lambda_1(\Omega);$

\item\label{(b)}
for any  $\alpha < \lambda _{1} \left( \Omega  \right)$
the functional $F_\alpha(w)$, defined in \eqref{F},
 has a unique maximizer 
 $v\ge0$, which  is the unique weak solution to problem \eqref{veq.0},
and therefore \eqref{Q} holds true; 

\item\label{(c)} $\displaystyle Q_p(\alpha, \Omega)$ is increasing with respect to $\Omega$ (in the sense of inclusion), that is,
\[\Omega_1\subset\Omega_2\quad\Longrightarrow \quad 
Q_p(\alpha,\Omega_1)
\le
Q_p(\alpha,\Omega_2);
\]
\item\label{(d)}  if $\alpha<\lambda_1(\Omega^\sharp)$\,, then 
\[Q_p(\alpha,\Omega)\le Q_p(\alpha,\Omega^\sharp).
\]
\end{enumerate}
\end{proposition}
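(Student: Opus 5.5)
Part (a) and the existence half of (b) come from the direct method applied to $F_\alpha$, with coercivity checked separately in each range of $\alpha$; uniqueness in (b) then comes from a hidden convexity argument. Parts (c) and (d) are immediate consequences of the variational characterization \eqref{QQ}, by extension by zero and by Theorem \ref{polya_szego}.

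\textbf{(a) and existence in (b).} Fix $\alpha<\lambda_1(\Omega)$. Using \eqref{eigen}, for $\alpha\ge 0$ I bound
\[
F_\alpha(w)\le -\Bigl(1-\tfrac{\alpha}{\lambda_1(\Omega)}\Bigr)\|Dw\|_p^p+p|\Omega|^{1-1/p}\lambda_1(\Omega)^{-1/p}\|Dw\|_p .
\]
For $\alpha<0$ the term $\alpha\|w\|_p^p$ is simply nonpositive. In both cases $F_\alpha\le C$ and $-F_\alpha$ is coercive on $W_0^{1,p}(\Omega)$. Moreover $F_\alpha$ is weakly upper semicontinuous: $\|Dw\|_p^p$ is weakly lsc, and since $|\Omega|<\infty$, Rellich compactness makes $\int|w|^p$ and $\int w$ continuous along bounded sequences. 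So a maximizer exists and $Q_p$ is finite.

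Conversely, suppose $\alpha\ge\lambda_1(\Omega)$. Take $w=tu_1$ with $u_1>0$ the first eigenfunction. Then
\[
F_\alpha(tu_1)=t^p(\alpha-\lambda_1)\|u_1\|_p^p+pt\int u_1\to+\infty \quad\text{as } t\to\infty,
\]
so $Q_p=+\infty$. This proves (a).

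\textbf{Sign, equation and uniqueness in (b).} Since $F_\alpha(|w|)\ge F_\alpha(w)$, I may assume the maximizer satisfies $v\ge 0$. Its Euler--Lagrange equation is \eqref{veq.0}. Testing that equation with $v$ gives
\[
\|Dv\|_p^p-\alpha\|v\|_p^p=\int v ,
\]
hence $F_\alpha(v)=(p-1)\int v$, which is \eqref{Q}.

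Uniqueness is the main obstacle. When $\alpha>0$ the functional $F_\alpha$ is not concave, so there are two things to show: that the maximizer is unique, and that every nonnegative weak solution is a maximizer. I would handle both with the Díaz--Saa / hidden convexity trick, working with $w=v^p$ rather than $v$.

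Let $v_1,v_2\ge0$ be two nonnegative solutions. First I show $v_i>0$. The right-hand side satisfies $\alpha v^{p-1}+1\ge 1-|\alpha|v^{p-1}$. Hence $v_i$ is a positive supersolution near where it is small, and the strong minimum principle (Vázquez) gives positivity on each component. I expect boundedness via Moser iteration and mild regularity issues to need a citation, or an approximation of $\Omega$ by smooth sets.

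Next I use the test functions $\varphi_1=(v_1^p-v_2^p)/v_1^{p-1}$ and $\varphi_2=(v_2^p-v_1^p)/v_2^{p-1}$. Díaz--Saa's inequality says
\[
\int\Bigl(\tfrac{-\Delta_pv_1}{v_1^{p-1}}-\tfrac{-\Delta_pv_2}{v_2^{p-1}}\Bigr)(v_1^p-v_2^p)\ge0 .
\]
Substituting the equation turns this into
\[
\int\Bigl(\frac{1}{v_1^{p-1}}-\frac{1}{v_2^{p-1}}\Bigr)(v_1^p-v_2^p)\ge 0 .
\]
Here the $\alpha$ terms cancel exactly, which is why the trick works for every $\alpha$. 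The integrand is pointwise $\le 0$, since $s\mapsto s^{-(p-1)}$ is decreasing while $s\mapsto s^p$ is increasing. Therefore it vanishes, and $v_1=v_2$.

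Since the maximizer is such a solution, it is the unique weak solution and the unique maximizer.

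\textbf{(c) and (d).} For (c), note that extension by zero embeds $W_0^{1,p}(\Omega_1)\subset W_0^{1,p}(\Omega_2)$ and preserves $F_\alpha$. Taking the sup over the larger class gives the inequality; when $\alpha\ge\lambda_1(\Omega_2)$ the right side is $+\infty$ by (a).

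For (d), take the maximizer $v\ge0$ on $\Omega$. Then $v^\sharp\in W_0^{1,p}(\Omega^\sharp)$, and by Theorem \ref{polya_szego} and \eqref{inv},
\[
\|Dv^\sharp\|_p\le\|Dv\|_p,\qquad \|v^\sharp\|_p=\|v\|_p,\qquad \int v^\sharp=\int v .
\]
These hold for any sign of $\alpha$, since the $\alpha$ term is preserved exactly. Hence $F_\alpha(v)\le F_\alpha(v^\sharp)\le Q_p(\alpha,\Omega^\sharp)$. The latter is finite because $\alpha<\lambda_1(\Omega^\sharp)$, which holds automatically by Theorem \ref{KR} whenever $\alpha<\lambda_1(\Omega)$.
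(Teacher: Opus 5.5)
Your overall structure matches the paper's: the direct method for existence, $t u_1$ for the converse in (a), the D\'\i az--Saa/Brezis--Oswald mechanism for uniqueness, extension by zero for (c), and P\'olya--Szeg\H{o} for (d). However, item (b) is not fully proved, and there are two gaps.

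\textbf{Sign-changing solutions.} Your uniqueness argument only ever compares \emph{nonnegative} solutions. The identity $F_\alpha(|w|)-F_\alpha(w)=p\int_\Omega(|w|-w)$ does show that every maximizer is nonnegative, but it says nothing about weak solutions that are not known to be maximizers. Since \eqref{veq.0} is written with $|v|^{p-2}v$, a sign-changing weak solution is never excluded, so ``the unique weak solution'' is not established. The paper closes this by testing the equation with $v_-$. This gives $\int_\Omega|Dv_-|^p-\alpha\int_\Omega v_-^p=-\int_\Omega v_-$. Because $\alpha<\lambda_1(\Omega)$, the left side is $\ge 0$, while the right side is $<0$ unless $v_-\equiv 0$.

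\textbf{Admissibility of your test functions.} The unshifted test functions $(v_1^p-v_2^p)/v_1^{p-1}$ lie in $W_0^{1,p}(\Omega)$ only if $v_2/v_1$ and $v_1/v_2$ are bounded up to $\partial\Omega$, which is what the hypotheses of the D\'\i az--Saa inequality require. Interior positivity from V\'azquez's strong minimum principle does not give this on an arbitrary open set of finite measure, where no Hopf lemma is available. Approximating $\Omega$ by smooth sets does not by itself transfer uniqueness to the limit either. You need an extra argument, for instance a two-sided comparison $c\tau\le v_i\le C\tau$ with the $p$-torsion function $\tau$, using $L^\infty$ bounds. Alternatively, do what the paper does and replace $v_i$ by $v_i+\sigma$. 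Then the quotients are bounded as soon as the solutions are bounded, and no positivity or boundary comparability is needed. The $\alpha$-terms no longer cancel exactly, but they tend to $0$ by dominated convergence, and Fatou's lemma gives $v_1\equiv v_2$.

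\textbf{The remark in (d).} You say that $\alpha<\lambda_1(\Omega^\sharp)$ ``holds automatically by Theorem \ref{KR} whenever $\alpha<\lambda_1(\Omega)$''. This reverses the Faber--Krahn inequality, which states $\lambda_1(\Omega^\sharp)\le\lambda_1(\Omega)$. It is the hypothesis of (d) that implies $\alpha<\lambda_1(\Omega)$, not conversely. That implication is exactly what you need, and did not state, in order to take a maximizer on $\Omega$. It is also why Proposition \ref{prad} has to treat the case $\alpha\ge\lambda_1(\Omega^\sharp)$ separately. The inequality in (d) itself survives. In fact it holds for every $\alpha$ without using maximizers: apply Theorem \ref{polya_szego} and \eqref{inv} to $|w|^\sharp$ for an arbitrary $w\in W_0^{1,p}(\Omega)$ and take the supremum.
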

\begin{proof} We prove in sequence the various items.
\vskip.2cm
\noindent \underline{Item (\ref{(a)})}
\vskip.3cm

{  

Let us suppose that $-\infty<\alpha<\lambda_1(\Omega)$.
 For any  $ w \in W_{0}^{1,p} ( \Omega)$ we have
\begin{equation*}
F_{\alpha}(w)\leq \left( \alpha -\lambda _{1}\left( \Omega\right) \right)\int_{\Omega }|w(x)|^{p}\,\di x+p\int_{\Omega
}\left\vert w(x)\right\vert \di x.
\end{equation*}
For $\varepsilon>0$, Young inequality yields
\begin{equation*}
F_{\alpha}(w)\leq \left( \alpha+\varepsilon -
\lambda _{1} \left( \Omega  \right) \right)
\int_{\Omega }|w(x)|^{p}\,\di x
+C(\varepsilon)|\Omega|,\qquad\forall w \in
W_{0}^{1,p} ( \Omega)
\end{equation*}
and, if $\varepsilon$ is sufficiently small, we have
\begin{equation*}
F_{\alpha}(w)\leq C,\qquad
\forall w \in 
W_{0}^{1,p} ( \Omega),
\end{equation*}
which proves that $Q_p(\alpha,\Omega)$ is finite.

%
We now suppose that $Q_p(\alpha,\Omega)$ is finite. Let us assume, by contradiction, that
\begin{equation}
\text{$\alpha \geq \lambda _{1}$}\left( \text{$\Omega $}\right) .  \label{a>}
\end{equation}
Consider the family of functions $\left\{ tw_{1}\right\} $
where $t>0$ and $w_{1}$ is the positive eigenfunction corresponding to $\lambda _{1}$$\left( \text{$\Omega $}
\right) $, such that
\begin{equation*}
\left\Vert w_{1}\right\Vert _{L^{p}(\Omega)}=1.
\end{equation*}
Using (\ref{a>}), we have
\begin{equation*}
F_{\alpha}\left( tw_{1}\right) =-t^{p}\int_{\Omega }|Dw_{1}(x)|^{p}\,\di x
 +\alpha t^{p}\int_{\Omega }w_{1}(x)^{p}\,\di x+pt\int_{\Omega
}w_{1}(x)\, \di x
\end{equation*}

\begin{equation*}
=pt\int_{\Omega }w_{1}(x)\di x+t^{p}\left( \alpha -\text{$\lambda _{1}
$}\left( \text{$\Omega $}\right) \right)\int_{\Omega }w_{1}(x)^{p}\,\di x\geq pt\int_{\Omega
}w_{1}(x)\di x.
\end{equation*}
It follows that
\begin{equation*}
\lim_{t\rightarrow +\infty }F_{\alpha}\left( tw_{1}\right) =+\infty 
\end{equation*}
and therefore
\begin{equation*}
Q_{p}(\alpha ,\Omega ) =+\infty .
\end{equation*}
This is a contradiction, as we are under the assumption that 
$Q_{p}(\alpha ,\Omega )$ is finite.

\vskip.2cm
\noindent \underline{Item (\ref{(b)})}
\vskip.3cm

We begin by proving that for all $ \alpha < \lambda_{1}\left( \Omega \right)$
the functional $F$ attains a maximum.  We  assume that $\alpha >0$, since, otherwise, 
the proof becomes even simpler.

From the previous considerations, we know that
\begin{align}
\sup_{w\in W_{0}^{1,p} (\Omega)} F_{\alpha}(w) &= \sup_{w\in W_{0}^{1,p} (\Omega )} \left\{ -\int_{\Omega} |Dw(x)|^{p}\,\di x \right. \label{l>} \\
&\quad \left. +\alpha \int_{\Omega }|w(x)|^{p}\,\di x +p\int_{\Omega }w(x)\di x\right\} < + \infty. \nonumber
\end{align}
Let $\{w_{k}\}_{k\in \mathbb{N}}\subset W_{0}^{1,p} ( \Omega)$ be a maximizing sequence. 
By (\ref{l>}), there exists a constant $C$ such that
\begin{equation*}
\int_{\Omega }|Dw_{k}(x)|^{p}\,\di x-\alpha \int_{\Omega}
|w_{k}(x)|^{p}\,\di x-p\int_{\Omega }w_{k}(x)\di x\leq C
\qquad \forall k\in \mathbb{N}.
\end{equation*}
Hence, for every $k\in\mathbb{N}$ we obtain
$$
\int_{\Omega }|Dw_{k}(x)|^{p}\,\di x
\leq \frac{\alpha}{\lambda _{1}\left( \Omega \right)}
       \int_{\Omega }|Dw_{k}(x)|^{p}\,\di x
      +p\int_{\Omega }|w_{k}(x)|\,\di x+C.
$$
 Young inequality ensures that  $\forall \varepsilon >0$ there exists 
$C_{\varepsilon }>0$ such that
\begin{equation*}
\left(1-\frac{\alpha}{\lambda _{1}\left( \Omega \right)}\right)
\int_{\Omega }|Dw_{k}(x)|^{p}\,\di x
\leq \varepsilon \int_{\Omega }|w_{k}(x)|^{p}\,\di x
   +C_{\varepsilon}|\Omega |+C.
\end{equation*}
Finally, using the continuous embedding of 
$W_{0}^{1,p}\left( \Omega  \right)$
into 
$L^{p}\left( \Omega  \right)$, the arbitrariness of 
$\varepsilon$ implies that
\begin{equation}
\int_{\Omega }|Dw_{k}(x)|^{p}\,\di x\leq C
\qquad \forall k\in\mathbb{N},  \label{Equi}
\end{equation}
where, here and in the sequel, $C$ denotes a constant whose value may vary 
from line to line, but which does not depend on the significant parameters 
of the problem.

Finally, from the compact embedding of 
$W_{0}^{1,p}\left( \Omega  \right)$
into
$L^{p}\left( \Omega   \right)$
one deduces that, up to a not relabelled subsequence, 
there exists a function $w\in W_{0}^{1,p}\left( \Omega  \right)$ 
such that
\begin{equation*}
\left\{ 
\begin{array}{cc}
w_{k}\rightarrow w & \text{weakly in }W_{0}^{1,p}\left( \Omega  \right), \\
\\
w_{k}\rightarrow w & \text{strongly in }L^{p}\left( \Omega  \right).
\end{array}
\right.
\end{equation*}
From this, in a standard way, the claim follows immediately. Hence, since $F_{\alpha}$ attains its 
maximum at $w$, its Euler equation in \eqref{veq.0} admits at least one solution.

We now address the issues related to uniqueness. First of all we observe that any solution to problem \eqref{veq.0} is nonnegative. Indeed, using $v_-=\max\{-v,0\}$ as test function in \eqref{veq.0}, we have
\begin{equation*}
\int_{\{v<0\}}|Dv|^p\, \di x-\alpha\int_{\{v<0\}} |v|^p\, \di x=\int_{\{v<0\}} v\, \di x\,.
\end{equation*}
Since the right-hand side is negative and $\alpha<\lambda_1(\Omega)$, this   gives a contradiction if $v_-\not \equiv0$.

The uniqueness of the solution to problem \eqref{veq.0} can be proven using an argument which goes back to \cite{BO} (for the case $p=2$) and which has been used in \cite{DS} (and subsequently refined in \cite{Lin}).

Let $u\not\equiv v$ be two solutions to problem \eqref{veq.0}. For $0<\sigma<k<+\infty$ we consider the test functions
\begin{equation}\label{test}
\varphi_1=u+\sigma-\frac{\bigl( v+\sigma\bigr)^p}{(u+\sigma)^{p-1}},\qquad
\varphi_2=v+\sigma-\frac{\bigl( u+\sigma\bigr)^p}{(v+\sigma)^{p-1}}\,.
\end{equation}
It is immediate to observe that $\varphi_1, \varphi_2\in
W_{0}^{1,p} ( \Omega)$, so we can use $\varphi_1$ in the equation satisfied by $u$ and $\varphi_2$ in the equation satisfied by $v$. Using the notation $u_\sigma=u+\sigma$, $v_\sigma=v+\sigma$, we obtain
\begin{eqnarray*}
\int_\Omega|Du|^p\, \di x-p\int_\Omega|Du|^{p-2}DuD\bigl( v_\sigma\bigr)\frac{\bigl( v_\sigma\bigr)^{p-1}}{(u_\sigma)^{p-1}}\, \di x
+(p-1)\int_\Omega|Du|^{p}\frac{\bigl( v_\sigma\bigr)^{p}}{(u_\sigma)^{p}}\, \di x=\\
=\alpha\int_\Omega \left(\frac u{u_\sigma}\right)^{p-1}\left((u_\sigma)^p-\bigl(v_\sigma\bigr)^p\right)\, \di x+\int_\Omega \left(u_\sigma-\frac{\bigl(v_\sigma\bigr)^p}{(u_\sigma)^{p-1}}\right)\, \di x,
\\
\int_\Omega|Dv|^p\, \di x-p\int_\Omega|Dv|^{p-2}DvD\bigl(u_\sigma\bigr)\frac{\bigl( u_\sigma\bigr)^{p-1}}{(v_\sigma)^{p-1}}\, \di x
+(p-1)\int_\Omega|Dv|^{p}\frac{\bigl( u_\sigma\bigr)^{p}}{(v_\sigma)^{p}}\, \di x=\\
=\alpha\int_\Omega \left(\frac v{v_\sigma}\right)^{p-1}\left((v_\sigma)^p-\bigl(u_\sigma\bigr)^p\right)\, \di x+\int_\Omega \left(v_\sigma-\frac{\bigl(u_\sigma\bigr)^p}{(v_\sigma)^{p-1}}\right)\, \di x.
\end{eqnarray*}
Summing the above equalities and using Young inequalities
\begin{equation*}
\left|p|Du|^{p-2}DuD\bigl(v_\sigma\bigr)\frac{\bigl(v_\sigma\bigr)^{p-1}}{(u_\sigma)^{p-1}}\right|
\le |D\bigl(v_\sigma\bigr)|^p+(p-1)|Du|^{p}\frac{\bigl(v_\sigma\bigr)^{p}}{(u_\sigma)^{p}}
\end{equation*}
\begin{equation*}
\left|p|Dv|^{p-2}DvD\bigl(u_\sigma\bigr)\frac{\bigl(u_\sigma\bigr)^{p-1}}{(v_\sigma)^{p-1}}\right|
\le |D\bigl(u_\sigma\bigr)|^p+(p-1)|Dv|^{p}\frac{\bigl(u_\sigma\bigr)^{p}}{(v_\sigma)^{p}}
\end{equation*}
we obtain
\begin{eqnarray*}
\alpha\left(\int_\Omega \left(\frac u{u_\sigma}\right)^{p-1}\left((u_\sigma)^p-\bigl(v_\sigma\bigr)^p\right)\, \di x
+\int_\Omega \left(\frac v{v_\sigma}\right)^{p-1}\left((v_\sigma)^p-\bigl(u_\sigma\bigr)^p\right)\, \di x\right)+\\
+\int_\Omega \left(u_\sigma-\frac{\bigl(v_\sigma\bigr)^p}{(u_\sigma)^{p-1}}\right)\, \di x
+\int_\Omega \left(v_\sigma-\frac{\bigl(u_\sigma\bigr)^p}{(v_\sigma)^{p-1}}\right)\, \di x\\
\ge\int_\Omega|Du|^p-\int_\Omega|D\bigl(u_\sigma\bigr)|^p\, \di x+\int_\Omega|Dv|^p-\int_\Omega|D\bigl(v_\sigma\bigr)|^p\, \di x\ge0.
\end{eqnarray*}
Therefore we have:
\begin{eqnarray*}
\alpha\left(\int_\Omega \left(\frac u{u_\sigma}\right)^{p-1}\left((u_\sigma)^p-\bigl(v_\sigma\bigr)^p\right)\, \di x
+\int_\Omega \left(\frac v{v_\sigma}\right)^{p-1}\left((v_\sigma)^p-\bigl(u_\sigma\bigr)^p\right)\, \di x\right)+\\
+\int_\Omega \left(u_\sigma-\frac{\bigl(v_\sigma\bigr)^p}{(u_\sigma)^{p-1}}\right)\, \di x
+\int_\Omega \left(v_\sigma-\frac{\bigl(u_\sigma\bigr)^p}{(v_\sigma)^{p-1}}\right)\, \di x\ge0,
\end{eqnarray*}
or equivalently:
\begin{eqnarray}\label{passig}
\alpha\int_\Omega \left(\left(\frac u{u_\sigma}\right)^{p-1}-\left(\frac v{v_\sigma}\right)^{p-1}\right)\left((u_\sigma)^p-(v_\sigma)^p\right)\, \di x\ge\qquad\qquad\\
\notag\ge\int_\Omega \left(\left(\frac 1{v_\sigma}\right)^{p-1}-\left(\frac 1{u_\sigma}\right)^{p-1}\right)\left((u_\sigma)^p-(v_\sigma)^p\right)\, \di x.
\end{eqnarray}
Now we pass to the limit as $\sigma  $ goes to zero in the last inequality. 
Firstly, since, 
\[
\lim_{\sigma\to 0}\left(\left(\frac u{u_\sigma}\right)^{p-1}-\left(\frac v{v_\sigma}\right)^{p-1}\right)=0\,, \qquad \hbox{a. e. in }\Omega\,,
\]
and,
\[
\left |\left(\left(\frac u{u_\sigma}\right)^{p-1}-\left(\frac v{v_\sigma}\right)^{p-1}\right)\left((u_\sigma)^p-(v_\sigma)^p\right)\right |\le 2(u^p+v^p+2^p)
\,,
\]
Lebesgue dominated convergence theorem implies
\begin{equation}\label{leb}
\lim_{\sigma\rightarrow0^+}\int_\Omega \left(\left(\frac u{u_\sigma}\right)^{p-1}-\left(\frac v{v_\sigma}\right)^{p-1}\right)\left((u_\sigma)^p-(v_\sigma)^p\right)\, \di x=0,
\end{equation}
Therefore, for $\sigma \to 0$, inequality \eqref{passig} gives
\begin{equation}\label{abs}
\liminf_{\sigma\rightarrow0^+}\int_\Omega \left(\left(\frac 1{v_\sigma}\right)^{p-1}-\left(\frac 1{u_\sigma}\right)^{p-1}\right)\left((u_\sigma)^p-(v_\sigma)^p\right)\, \di x
\le0,
\end{equation}
Moreover, since 
\begin{eqnarray*}
\left(\left(\frac 1{v_\sigma}\right)^{p-1}-\left(\frac 1{u_\sigma}\right)^{p-1}\right)\left((u_\sigma)^p-(v_\sigma)^p\right)\ge0,
\end{eqnarray*}
if $u\not\equiv v$, by Fatou lemma we have
\begin{eqnarray*}
\liminf_{\sigma\rightarrow0^+}\int_\Omega \left(\left(\frac 1{v_\sigma}\right)^{p-1}-\left(\frac 1{u_\sigma}\right)^{p-1}\right)\left((u_\sigma)^p-(v_\sigma)^p\right)\, \di x\ge\\
\ge\int_\Omega \left(\left(\frac 1{v}\right)^{p-1}-\left(\frac 1{u}\right)^{p-1}\right)\left(u^p-v^p\right)\, \di x>0\,.
\end{eqnarray*}
This inequality contradicts \eqref{abs}, and the claim is proved.
\vskip.2cm
\noindent \underline{Item (\ref{(c)})}
\vskip.3cm
The claim follows immediately from the definition of $Q_p(\alpha, \Omega)$.

\vskip.2cm
\noindent \underline{Item (\ref{(d)})}
\vskip.3cm
The claim follows immediately from P\'olya-Szeg\H o principle given by Theorem \ref{polya_szego} and property \eqref{inv}. 
}
\end{proof}
Now we prove some properties of $Q_p(\alpha, \Omega)$ when $\Omega$ is fixed and the parameter $\alpha$ varies.
\begin{proposition}\label{propa}
Let $\Omega\subset \R^N$, with finite measure. Then:
\begin{enumerate}[\rm(a)]
\item\label{(aa)} if $v_{(\alpha)}$ denotes the solution to problem \eqref{veq.0} for a given value of the parameter $\alpha\in (-\infty,\lambda_1(\Omega))$, we have:
\begin{equation*}
v_{(\alpha)}(x)\le v_{(\beta)}(x), \qquad x\in \Omega,\>-\infty<\alpha<\beta<\lambda_1(\Omega);
\end{equation*}
\item\label{(ab)} $\quad\displaystyle Q_p(\alpha, \Omega)$ is increasing with respect to $\alpha$ and, if $v$ solves \eqref{veq.0}, it holds
\[\dfrac\di{\di\alpha}Q_p(\alpha,\Omega)=\int_\Omega |v(x)|^p\, \di x;
\]
\item\label{(ac)} $\quad\displaystyle\lim_{\alpha\rightarrow-\infty} Q_p(\alpha,\Omega)=0;$
\item\label{(ad)} $\quad\displaystyle\lim_{\alpha\rightarrow \lambda_1(\Omega)^-} Q_p(\alpha,\Omega)=+\infty.$
\end{enumerate}
\end{proposition}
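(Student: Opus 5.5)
Item (a) will come from a comparison principle, and item (b) from the variational structure: $Q_p(\alpha,\Omega)=\sup_w F_\alpha(w)$ is a supremum of functions affine in $\alpha$, hence convex in $\alpha$. Items (c) and (d) will use Proposition \ref{prop} together with explicit test functions and a priori bounds.

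For (a), fix $\alpha<\beta<\lambda_1(\Omega)$ and write $v_\alpha=v_{(\alpha)}$, $v_\beta=v_{(\beta)}$. Both are nonnegative by Proposition \ref{prop}(b). Since $\beta v_\beta^{p-1}\ge\alpha v_\beta^{p-1}$, the function $v_\beta$ is a weak supersolution of $-\Delta_p w=\alpha w^{p-1}+1$. I would rerun the Díaz--Saa type argument from the uniqueness proof, with the test functions \eqref{test} cut off on the set $\{v_\alpha>v_\beta\}$, i.e. $\varphi=\bigl(u_\sigma-v_\sigma^p/u_\sigma^{p-1}\bigr)^+$ with $u=v_\alpha$, $v=v_\beta$. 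The same Picone/Young manipulation gives $\int_{\{v_\alpha>v_\beta\}}(v_\beta^{1-p}-v_\alpha^{1-p})(v_\alpha^p-v_\beta^p)\le0$, which forces $|\{v_\alpha>v_\beta\}|=0$. An alternative is the variational route: since $F_\beta(w)\ge F_\alpha(w)$, a lattice argument comparing $F_\alpha(\max(v_\alpha,v_\beta))$ with $F_\alpha(v_\alpha)$ and using uniqueness of the maximizer may also work. I expect (a) to be the main obstacle, because comparison for $-\Delta_p w=\alpha w^{p-1}+1$ with $\alpha>0$ is not a plain monotone comparison, and the positivity of the source term must be exploited exactly as in the uniqueness proof.

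For (b), monotonicity is immediate: $F_\beta(w)\ge F_\alpha(w)$ for every $w$, so $Q_p(\beta,\Omega)\ge Q_p(\alpha,\Omega)$. For the derivative, write $v_\alpha=v_{(\alpha)}$ and sandwich. Testing the supremum defining $Q_p(\beta,\Omega)$ with $v_\alpha$, and the one defining $Q_p(\alpha,\Omega)$ with $v_\beta$, gives
\[
(\beta-\alpha)\int_\Omega v_\alpha^p\,\di x\le Q_p(\beta,\Omega)-Q_p(\alpha,\Omega)\le(\beta-\alpha)\int_\Omega v_\beta^p\,\di x .
\]
It then suffices to show that $\beta\mapsto\int_\Omega v_\beta^p$ is continuous. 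For this I use the a priori bound from the proof of Proposition \ref{prop}(b): as long as $\beta$ stays away from $\lambda_1(\Omega)$, $\|Dv_\beta\|_p$ is bounded uniformly. Compactness in $L^p$ then gives a subsequential limit, which solves the $\alpha$-problem, and uniqueness identifies it as $v_\alpha$. (Monotone convergence from (a) could replace this step.) Dividing by $\beta-\alpha$ and letting $\beta\to\alpha$ on both sides yields the formula.

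For (c), note that $Q_p(\alpha,\Omega)\ge F_\alpha(0)=0$. For $\alpha<0$,
\[
F_\alpha(w)\le -|\alpha|\int_\Omega|w|^p+p\int_\Omega|w|\le \sup_{t\ge0}\bigl(pt-|\alpha|t^p\bigr)\,|\Omega| = C_p|\alpha|^{-1/(p-1)}|\Omega|\to0 .
\]
For (d), take the normalized first eigenfunction $w_1>0$ and test with $tw_1$:
\[
Q_p(\alpha,\Omega)\ge pt\int_\Omega w_1-t^p(\lambda_1-\alpha).
\]
Optimizing in $t$ gives a lower bound of order $(\lambda_1-\alpha)^{-1/(p-1)}\to+\infty$ as $\alpha\to\lambda_1(\Omega)^-$.
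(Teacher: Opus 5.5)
Your proposal is correct and follows the paper's proof. In (a) you use the same D\'{\i}az--Saa/Picone argument with the cut-off test functions; your appeal to the supersolution property of $v_{(\beta)}$ is equivalent to the paper's observation that the $(\beta-\alpha)$ term has the right sign on $\{v_{(\alpha)}>v_{(\beta)}\}$. In (b) you use the same sandwich $(\beta-\alpha)\int_\Omega v_{(\alpha)}^p\le Q_p(\beta,\Omega)-Q_p(\alpha,\Omega)\le(\beta-\alpha)\int_\Omega v_{(\beta)}^p$ together with $L^p$-continuity of $\beta\mapsto v_{(\beta)}$. In (d) you use the same test functions $tw_1$. Your (c) is only a slightly more direct variant: a pointwise bound on the integrand of $F_\alpha$ replaces testing the equation with $v_{(\alpha)}$ and H\"older, and it gives the same estimate $Q_p(\alpha,\Omega)\le(p-1)|\Omega|(-\alpha)^{-1/(p-1)}$.

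The one step you leave implicit is why the $L^p$-limit $\bar u$ of $v_{(\beta)}$ solves the $\alpha$-problem; the paper invokes Boccardo--Murat for a.e.\ convergence of gradients. Within your own framework it follows from weak lower semicontinuity, $F_\alpha(\bar u)\ge\limsup_{\beta\to\alpha}F_\beta(v_{(\beta)})\ge\lim_{\beta\to\alpha}F_\beta(v_{(\alpha)})=Q_p(\alpha,\Omega)$, together with uniqueness of the maximizer.
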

\begin{proof} We prove in sequence the various items.
\vskip.2cm
\noindent \underline{Item (\ref{(aa)})}
\vskip.3cm
The assertion is natural, since increasing the parameter $\alpha$ increases
  the source term $\alpha v^{p-1}+1$ whenever $v\ge0$. We make this comparison
  rigorous by proving that the set where the solution corresponding to the
  smaller parameter exceeds the one corresponding to the larger parameter has
  zero measure.

The claim can be proved proceeding as in the proof of item (\ref{(b)}) of Proposition \ref{prop}.
Let us fix $-\infty<\alpha<\beta<\lambda_1(\Omega)$ and let us put
\begin{equation*}
u=v_{(\alpha)},\qquad w=v_{(\beta)}.
\end{equation*}
For $0<\sigma<k<+\infty$ we consider the test functions
\begin{equation*}
\varphi_1=\frac{\left(\bigl( u+\sigma\bigr)^p-{\bigl( w+\sigma) \bigr)^p}\right)_+}{(u+\sigma)^{p-1}},\qquad
\varphi_2=\frac{\left(\bigl( u+\sigma\bigr)^p-{\bigl( w+\sigma\bigr)^p}\right)_+}{(w+\sigma)^{p-1}},
\end{equation*}
where, for $s\in\R$, we use the notation   $s_+=\max\{s,0\}$. It is immediate to observe that $\varphi_1, \varphi_2\in
W_{0}^{1,p} ( \Omega)$, so we can use $\varphi_1$ in the equation satisfied by $u$ and $\varphi_2$ in the equation satisfied by $w$. Using the notation $u_\sigma=u+\sigma$, $w_\sigma=w+\sigma$ and $E=\{x:  u_\sigma(x))>  w_\sigma(x)\}$, we obtain
\begin{eqnarray*}
\int_{E}|Du|^p\, \di x-p\int_{E}|Du|^{p-2}DuD\bigl(w_\sigma\bigr)\frac{\bigl(w_\sigma\bigr)^{p-1}}{(u_\sigma)^{p-1}}\, \di x+
\\
+(p-1)\int_{E}|Du|^{p}\frac{\bigl(w_\sigma\bigr)^{p}}{(u_\sigma)^{p}}\, \di x=\\
=\alpha\int_{E} \left(\frac u{u_\sigma}\right)^{p-1}\left(\bigl(u_\sigma\bigr)^p-\bigl(w_\sigma\bigr)^p\right)\, \di x+\int_{E} \left(\frac{(u_\sigma)^p-\bigl(w_\sigma\bigr)^p}{(u_\sigma)^{p-1}}\right)\, \di x,
\end{eqnarray*}
\begin{eqnarray*}
\int_{E}|Dw|^p \di x-p\int_{E}|Dw|^{p-2}DwD\bigl(u_\sigma\bigr)\frac{\bigl(u_\sigma\bigr)^{p-1}}{(w_\sigma)^{p-1}}\, \di x+\\
+(p-1)\int_{E}|Dw|^{p}\frac{\bigl(u_\sigma\bigr)^{p}}{(w_\sigma)^{p}}\, \di x=\\
=\beta\int_{E} \left(\frac w{w_\sigma}\right)^{p-1}\left(\bigl(w_\sigma\bigr)^p-\bigl(u_\sigma\bigr)^p\right)\, \di x+\int_{E} \left(\frac{(w_\sigma)^p-\bigl(u_\sigma\bigr)^p}{(w_\sigma)^{p-1}}\right)\, \di x.
\end{eqnarray*}
Summing the above equalities and using Young inequalities
\begin{equation*}
\left|p|Du|^{p-2}DuD\bigl(w_\sigma\bigr)\frac{\bigl(w_\sigma\bigr)^{p-1}}{(u_\sigma)^{p-1}}\right|
\le |D\bigl(w_\sigma\bigr)|^p+(p-1)|Du|^{p}\frac{\bigl(w_\sigma\bigr)^{p}}{(u_\sigma)^{p}}
\end{equation*}
\begin{equation*}
\left|p|Dw|^{p-2}DwD\bigl(u_\sigma\bigr)\frac{\bigl(u_\sigma\bigr)^{p-1}}{(w_\sigma)^{p-1}}\right|
\le |D\bigl(u_\sigma\bigr)|^p+(p-1)|Dw|^{p}\frac{\bigl(u_\sigma\bigr)^{p}}{(w_\sigma)^{p}}
\end{equation*}
we obtain
\begin{align}\label{Tk}
\alpha\int_{E} \left(\left(\frac u{u_\sigma}\right)^{p-1}-\left(\frac w{w_\sigma}\right)^{p-1}\right)\left(\bigl(u_\sigma\bigr)^p-\bigl(w_\sigma\bigr)^p\right)\, \di x&\\
+(\beta-\alpha)\int_{E} \left(\frac w{w_\sigma}\right)^{p-1}\left(\bigl(w_\sigma\bigr)^p-\bigl(u_\sigma\bigr)^p\right)\, \di x&\notag\\
+\int_{E} \left(\frac{(u_\sigma)^p-\bigl(w_\sigma\bigr)^p}{(u_\sigma)^{p-1}}\right)\, \di x
+\int_{E} \left(\frac{(w_\sigma)^p-\bigl(u_\sigma\bigr)^p}{(w_\sigma)^{p-1}}\right)\, \di x&\ge0\notag
\end{align}
Then we get:
\begin{eqnarray*}
\alpha\int_{u>w} \left(\left(\frac u{u_\sigma}\right)^{p-1}-\left(\frac w{w_\sigma}\right)^{p-1}\right)\left((u_\sigma)^p-(w_\sigma)^p\right)\, \di x+\\
+(\beta-\alpha)\int_{u>w} \left(\frac w{w_\sigma}\right)^{p-1}\left((w_\sigma)^p-(u_\sigma)^p\right)\, \di x\ge\\
\ge\int_{u>w} \left(\left(\frac 1{w_\sigma}\right)^{p-1}-\left(\frac 1{u_\sigma}\right)^{p-1}\right)\left((u_\sigma)^p-(w_\sigma)^p\right)\, \di x.
\end{eqnarray*}
It is clear that 
\begin{equation*}
\lim_{\sigma\rightarrow0^+}\int_{u>w} \left(\left(\frac u{u_\sigma}\right)^{p-1}-\left(\frac w{w_\sigma}\right)^{p-1}\right)\left((u_\sigma)^p-(w_\sigma)^p\right)\, \di x=0,
\end{equation*}
and
\begin{equation*}
\lim_{\sigma\rightarrow0^+}\int_{u>w} \left(\frac w{w_\sigma}\right)^{p-1}\left((w_\sigma)^p-(u_\sigma)^p\right)\, \di x=\int_{u>w} \left(w^p-u^p\right)\, \di x\le0,
\end{equation*}
then
\begin{equation}\label{absa}
\liminf_{\sigma\rightarrow0^+}\int_{u>w} \left(\left(\frac 1{w_\sigma}\right)^{p-1}-\left(\frac 1{u_\sigma}\right)^{p-1}\right)\left((u_\sigma)^p-(w_\sigma)^p\right)\, \di x
\le0,
\end{equation}
where the integrand is nonnegative, that is,
\begin{eqnarray*}
\left(\left(\frac 1{w_\sigma}\right)^{p-1}-\left(\frac 1{u_\sigma}\right)^{p-1}\right)\left((u_\sigma)^p-(w_\sigma)^p\right)
\ge0.
\end{eqnarray*}
If $E=\{x:  u(x))>  w(x)\}$ $u> w$ is a set of positive measure, by Fatou lemma we have
\begin{eqnarray*}
\liminf_{\sigma\rightarrow0^+}\int_{u>w} \left(\left(\frac 1{w_\sigma}\right)^{p-1}-\left(\frac 1{u_\sigma}\right)^{p-1}\right)\left((u_\sigma)^p-(w_\sigma)^p\right)\, \di x\ge\\
\ge\int_{u>w} \left(\left(\frac 1{w}\right)^{p-1}-\left(\frac 1{u}\right)^{p-1}\right)\left(u^p-w^p\right)\, \di x>0,
\end{eqnarray*}
which contradicts \eqref{absa}, and the claim is proved.
\vskip.2cm
\noindent \underline{Item (\ref{(ab)})}
\vskip.3cm

The monotonicity of $Q_p(\alpha, \Omega)$ with respect to $\alpha$ can be proven from the definition.
In order to prove the differentiation formula we firstly show that, for every $\alpha\in(-\infty, \lambda_1(\Omega))$, using the notation of item (\ref{(aa)}), we have
\begin{equation}\label{conve}
v_{(\alpha+\varepsilon)}\rightarrow v_{(\alpha)}, \quad \text{strongly in }L^p(\Omega ),\text{  as }\varepsilon\rightarrow0.
\end{equation}
Let us put
$$
u_\varepsilon =v_{(\alpha+\varepsilon)}, \qquad u = v_{(\alpha)}.$$
Using equation \eqref{veq.0} satisfied by $u_\varepsilon$, we have, for $\delta>0$ suitably small and  for a suitable constant $C(\delta)>0$,
\begin{eqnarray*}
\int_\Omega|Du_\varepsilon|^p\, \di x=(\alpha+\varepsilon)\int_\Omega |u_\varepsilon|^p\, \di x+p\int_\Omega u_\varepsilon\, \di x\\
\le\left(\frac{\alpha+\varepsilon}{\lambda_1(\Omega)}+\delta\right)\int_\Omega|Du_\varepsilon|^p\, \di x+C(\delta)|\Omega|.
\end{eqnarray*}
This means that $u_\varepsilon$ is bounded in $W^{1,p}(\Omega)$. Then there exists a subsequence $u_{\varepsilon_h}$ which strongly converges in $L^p(\Omega)$ for $\varepsilon$ which goes to 0. Actually, in view of the monotonicity proven in item
(\ref{(aa)}), we can say that the whole sequence $u_\varepsilon$ is such that
\begin{equation}\label{convu}
u_\varepsilon\rightarrow \bar u, \quad \text{strongly in }L^p(\Omega),\text{  as }\varepsilon\rightarrow0,
\end{equation}
for some $\bar u\in L^p(\Omega)$. A result contained, for example, in  \cite{BoccardoMurat} allows us to get the almost everywhere convergence of $Du_\varepsilon$, then we can pass to the limit as $\varepsilon \rightarrow0$ in the equation satisfied by $u_\varepsilon$. In view of the uniqueness stated in Proposition \ref{prop}, item (\ref{(b)}), we have that $\bar u=u$, then \eqref{conve} is proved.

In order to prove the differentiation formula, we observe that, using the definition of $Q_p(\alpha,\Omega)$ and of $F_\alpha(w)$ given in \eqref{QQ} and \eqref{F}, respectively, it holds:
\begin{equation*}
F_{\alpha+\varepsilon}(u_\varepsilon)-F_{\alpha}(u_\varepsilon)\ge Q_p(\alpha+\varepsilon,\Omega)-Q_p(\alpha,\Omega)
\ge F_{\alpha+\varepsilon}(u)-F_{\alpha}(u)
\end{equation*}
that is,
\begin{equation}\label{FQa}
\varepsilon\int_\Omega |u_\varepsilon|^p\, \di x\ge Q_p(\alpha+\varepsilon,\Omega)-Q_p(\alpha,\Omega)
\ge \varepsilon\int_\Omega |u|^p\, \di x.
\end{equation}
Taking into account \eqref{conve}, inequalities \eqref{FQa} imply
$$\lim_{\varepsilon\rightarrow0} \frac{Q_p(\alpha+\varepsilon,\Omega)-Q_p(\alpha,\Omega)}\varepsilon=
\int_\Omega |u|^p\, \di x
$$
and the claim is proved.
\vskip.2cm
\noindent \underline{Item (\ref{(ac)})}
\vskip.3cm
Since \eqref{Q} holds true, we prove that 
\begin{equation}\label{limQ}
\displaystyle\lim_{\alpha\rightarrow-\infty} \int_\Omega v_{(\alpha)}(x)\, \di x=0  \,.
\end{equation}
To this aim we choose $v_{(\alpha)}$ as test function in \eqref{veq.0} and we get
\begin{equation*}
\frac 1\alpha \int_\Omega |D v_{(\alpha)}|^p\, \di x\,=
\int_\Omega |v_{(\alpha)}(x)|^p\, \di x
+\frac 1\alpha \int_\Omega v_{(\alpha)}(x)\, \di x\,.
\end{equation*}
Since $\alpha<0$, the right hand side of the above equality is negative. Applying H\"older inequality we then get
\begin{align*}
\int_\Omega | v_{(\alpha)}|^p\, \di x &\le-\frac 1\alpha\int_\Omega v_{(\alpha)}(x)\, \di x\\
\\
\displaystyle\quad&\le -\frac 1\alpha |\Omega|^{1-\frac 1p}
\left ( \int_\Omega |v_{(\alpha)}(x)|^p\, \di x \right)^\frac{1}{p}.
\end{align*}
We deduce
\begin{equation*}
\int_\Omega | v_{(\alpha)}|^p\, \di x \le \frac 1{(-\alpha)^{\frac p{p-1}}} |\Omega|
\end{equation*}
and therefore
\begin{equation}\label{limQ2}
\lim_{\alpha \to -\infty}\int_\Omega | v_{(\alpha)}|^p\, \di x =0\,.
\end{equation}
On the other hand, by using H\"older inequality, we get
\begin{equation}\label{limQ3}
\left |\int_\Omega  v_{(\alpha)}\, \di x \right |
\le 
 |\Omega|^{1-\frac 1p}
\left (\int_\Omega | v_{(\alpha)}|^p\, \di x\right)^\frac 1p.
\end{equation}
Combining \eqref{limQ2} and \eqref{limQ3}, the assertion follows. 
\vskip.2cm
\noindent \underline{Item (\ref{(ad)})}
\vskip.3cm

We note that, by the monotonicity of $Q_p(\alpha, \Omega)$ stated in item (\ref{(ab)}), the limit
\[\lim_{\alpha\rightarrow\lambda_1(\Omega)^-}Q_p(\alpha, \Omega)
\]
exists, possibly infinite. 
We can use as test function $w=ku$, where $k$ is an arbitrary positive constant and $u$ is a positive eigenfunction of problem \eqref{eq.0}, obtaining
\begin{align*}
\displaystyle Q_p(\alpha, \Omega)&\ge-\int_\Omega|D(ku)|^p\, \di x+\alpha\int_\Omega |ku|^p\, \di x+p\int_\Omega ku\, \di x=\\
\\
\displaystyle\quad&= \bigl(\alpha-\lambda_1(\Omega)\bigr)k^p\int_\Omega |u|^p\, \di x+pk\int_\Omega u\, \di x.
\end{align*}
Letting $\alpha\rightarrow\lambda_1(\Omega)^-$, we have
\[\lim_{\alpha\rightarrow\lambda_1(\Omega)^-}Q_p(\alpha, \Omega)\ge pk\int_\Omega u\, \di x
\]
and from the arbitrariness of $k$ the claim follows.
\end{proof}

When $\Omega$ is a ball, all the results stated in Proposition \ref{prop} hold true, but some further properties about the behavior of $Q_p(\alpha, \Omega)$ with respect to the choice of the ball can be added. So, we put, for $r>0$,
\begin{equation*}
B_r= \{x  \in\RR : |x| < r\}
\end{equation*}
and we introduce the function of two variables
\begin{equation}\label{Qrad}
Q_p^\sharp(\alpha,r)=Q_p(\alpha,B_{r}), 
\end{equation}
defined on the following set
$$D = \{(\alpha, r) : \alpha \le 0, r > 0\} \cup \{(\alpha, r) : \alpha > 0, 0 < r < g(\alpha)\}
$$
being
$$g(\alpha)=\left(\frac{\lambda_1(B_1)}\alpha\right)^{\frac1{p}}.
$$
Indeed, if $\alpha>0$ and $0<r<g(\alpha)$, we have 
\[
\alpha< r^{-p}\lambda_{1}(B_{1})=\lambda_{1}(B_{r})
\]
and the value $Q_p^\sharp(\alpha,r)$ is finite.

Let us also observe that a simple scaling argument shows that
\begin{equation}\label{Qris}
Q_p^\sharp(\alpha,r)=r^{\frac{Np-N+p}{p-1}}Q_p^\sharp(\alpha r^{p},1), \qquad (\alpha,r) \in D.
\end{equation}
We now prove some results about the behavior of $Q_p^\sharp(\alpha,r)$ with respect to $r$.
\begin{proposition}
\label{finite}
Let $Q_p^\sharp(\alpha,r)$ be the function defined in \eqref{Qrad}, we have:
\vskip.2cm
\noindent- for any fixed $\alpha\le0$, $Q_p^\sharp(\alpha,r)$ is finite for every $r>0$ and
\begin{equation}\label{itlim}
\lim_{r\rightarrow+\infty}Q_p^\sharp(\alpha,r)=+\infty;
\end{equation}
\noindent- for any fixed $\alpha>0$, $Q_p^\sharp(\alpha,r)$  is finite if and only if
\begin{equation}\label{t}
0<r<{\bar r}\equiv\left(\frac{\lambda_1(B_1)}\alpha\right)^{\frac1{p}};
\end{equation}
furthermore
\begin{equation}\label{tlim}
\lim_{r\rightarrow\,{\bar r^-}}Q_p^\sharp(\alpha,r)=+\infty.
\end{equation}
\end{proposition}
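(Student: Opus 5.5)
The whole argument reduces to applying Proposition \ref{prop} (a) and Proposition \ref{propa} (d) with $\Omega=B_r$, together with the scaling law \eqref{rescale}, $\lambda_1(B_r)=r^{-p}\lambda_1(B_1)$, and the scaling identity \eqref{Qris} for $Q_p^\sharp$.

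\emph{Case $\alpha\le0$.} For every $r>0$ we have $\alpha\le 0<\lambda_1(B_r)$, so $Q_p^\sharp(\alpha,r)$ is finite by item (\ref{(a)}) of Proposition \ref{prop}. To obtain \eqref{itlim}, write
\[
Q_p^\sharp(\alpha,r)=r^{\frac{Np-N+p}{p-1}}\,Q_p^\sharp(\alpha r^p,1)
\]
by \eqref{Qris}. The second factor tends to $0$ as $r\to\infty$ when $\alpha<0$, so its lower bound has to be handled directly; the case $\alpha=0$ is immediate since then the factor is the constant $Q_p^\sharp(0,1)>0$. For $\alpha<0$, I would test \eqref{QQ} on $B_r$ with $w(x)=k\,\phi(x/r)$, where $\phi\ge0$ is a fixed nonzero function in $W_0^{1,p}(B_1)$. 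After the change of variables this gives
\[
Q_p^\sharp(\alpha,r)\ \ge\ r^N\Big(-k^p r^{-p}\!\int_{B_1}|D\phi|^p+\alpha k^p\!\int_{B_1}\phi^p+pk\!\int_{B_1}\phi\Big).
\]
Choose $k$ small and independent of $r$ (for $r\ge1$, bound $r^{-p}\le1$) so that the bracket is bounded below by a positive constant $c$. Then $Q_p^\sharp(\alpha,r)\ge c\,r^N\to+\infty$.

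\emph{Case $\alpha>0$.} By \eqref{rescale}, the condition $\alpha<\lambda_1(B_r)$ is equivalent to $r<\bar r$. Item (\ref{(a)}) of Proposition \ref{prop} then says that $Q_p^\sharp(\alpha,r)$ is finite if and only if $0<r<\bar r$, which is \eqref{t}. For \eqref{tlim}, use \eqref{Qris} again: as $r\to\bar r^-$, the prefactor $r^{\frac{Np-N+p}{p-1}}$ tends to the positive number $\bar r^{\frac{Np-N+p}{p-1}}$. Meanwhile $\beta:=\alpha r^p$ increases to $\alpha\bar r^p=\lambda_1(B_1)$ from below. Item (\ref{(ad)}) of Proposition \ref{propa} gives $Q_p^\sharp(\beta,1)\to+\infty$ as $\beta\to\lambda_1(B_1)^-$, so the product diverges.

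The only step needing care is the lower bound in \eqref{itlim} for $\alpha<0$, because the scaling identity alone is inconclusive there. The explicit test function handles it. Alternatively, one can use item (\ref{(c)}) of Proposition \ref{prop} to reduce to disjoint translated unit balls inside $B_r$ and add up their contributions, each equal to $Q_p^\sharp(\alpha,1)>0$; their number grows like $r^N$.
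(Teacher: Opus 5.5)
Your proof is correct and follows the paper's argument. Finiteness and \eqref{tlim} come from Proposition \ref{prop}, item \ref{(a)}, the scaling law \eqref{rescale}, the identity \eqref{Qris} and Proposition \ref{propa}, item \ref{(ad)}, on $B_1$, exactly as in the paper, and \eqref{itlim} comes from a test function in \eqref{QQ} giving growth of order $r^N$. The only differences are in the test functions: for $\alpha<0$ you dilate a fixed profile with small amplitude $k$, whereas the paper uses a plateau at height $c_\alpha=(-1/\alpha)^{1/(p-1)}$ with a boundary layer of width one, and for $\alpha=0$ your one-line use of \eqref{Qris} replaces the paper's separate test-function computation.
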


\begin{proof}
By Proposition \ref{prop}, item \ref{(a)}, and by the scaling law
\[
\lambda_1(B_r)=r^{-p}\lambda_1(B_1),
\]
the quantity $Q_p^\sharp(\alpha,r)$ is finite if and only if
\[
\alpha<\lambda_1(B_r)=r^{-p}\lambda_1(B_1).
\]
Hence, if $\alpha \le 0$, 
$Q_p^\sharp(\alpha,r)$ is finite for every
$r>0$. If $\alpha>0$, it is finite if and only if
\[
0<r<\bar r:=\left(\frac{\lambda_1(B_1)}{\alpha}\right)^{1/p}.
\]
Let us prove \eqref{itlim}. We first assume that 
$\alpha<0$. For
$r>1$, set
\[
c_\alpha=\left(-\frac1\alpha\right)^{\frac1{p-1}}
\]
and define
\[
w_r(x)=
\begin{cases}
c_\alpha, & |x|<r-1,\\[2mm]
c_\alpha(r-|x|), & r-1\le |x|<r,\\[2mm]
0, & |x|\ge r.
\end{cases}
\]
Then 
$ w_r\in W_0^{1,p}(B_r)$, and therefore
\[
F_\alpha(w_r)\le Q_p^\sharp(\alpha,r).
\]
Since
\[
\alpha c_\alpha^p+p c_\alpha=(p-1)c_\alpha,
\]
we have
\[
F_\alpha(w_r)
\ge
(p-1)c_\alpha |B_{r-1}|
-
c_\alpha^p |B_r\setminus B_{r-1}|
+
\int_{B_r\setminus B_{r-1}}
\bigl(\alpha w_r^p+p w_r\bigr)\,\di x .
\]
The last two terms are bounded from below by a quantity of order
$r^{N-1}$, while the first term is of order $r^N$.
Hence
\[
\lim_{r\to+\infty}F_\alpha(w_r)=+\infty.
\]
Consequently we have
\[
\lim_{r\to+\infty}Q_p^\sharp(\alpha,r)=+\infty .
\]
If $\alpha=0$, for $r>1$ and 
$k>0$, define
\[
w_{r,k}(x)=
\begin{cases}
k, & |x|<r-1,\\[2mm]
k(r-|x|), & r-1\le |x|<r,\\[2mm]
0, & |x|\ge r.
\end{cases}
\]
Then
\[
F_0(w_{r,k})
=
-\int_{B_r}|\nabla w_{r,k}|^p\,\di x
+
p\int_{B_r}w_{r,k}\,\di x .
\]
In particular,
\[
F_0(w_{r,k})
\ge
-k^p |B_r\setminus B_{r-1}|
+
pk |B_{r-1}|.
\]
Since the first term is of order $r^{N-1}$, whereas the second one is
of order $r^N$, we get
\[
\lim_{r\to+\infty}F_0(w_{r,k})=+\infty .
\]
Thus also
\[
\lim_{r\to+\infty}Q_p^\sharp(0,r)=+\infty .
\]
It remains to prove \eqref{tlim}.
Let $\alpha>0$ and let
\[
\bar r=\left(\frac{\lambda_1(B_1)}{\alpha}\right)^{1/p}.
\]
Using the scaling formula \eqref{Qris}, we have
\[
Q_p^\sharp(\alpha,r)
=
r^{\frac{Np-N+p}{p-1}}
Q_p^\sharp(\alpha r^p,1).
\]
As $r\to\bar r^-$, one has
\[
\alpha r^p\to \alpha \bar r^p=\lambda_1(B_1).
\]
Therefore, by Proposition \ref{propa}, item \ref{(ad)}, applied to
the unit ball $B_1$,
\[
Q_p^\sharp(\alpha r^p,1)\to+\infty.
\]
Since
\[
r^{\frac{Np-N+p}{p-1}}\to
\bar r^{\frac{Np-N+p}{p-1}}>0,
\]
we obtain
\[
\lim_{r\to\bar r^-}Q_p^\sharp(\alpha,r)=+\infty.
\]
The proof is complete.
\end{proof}

\begin{proposition}\label{derQ}
For any fixed $\alpha$, the function $Q_p^\sharp(\alpha,r)$ 
defined in \eqref{Qrad}
is strictly increasing with respect to $r$
in its interval of definition. Moreover,
if $v$ denotes the generalized torsion function in $B_r$, then
\begin{equation}
\label{shapeQ}
\frac{\partial}{\partial r}Q_p^\sharp(\alpha,r)
=
(p-1)\int_{\partial B_r}|\nabla v|^p\,\di\mathcal H^{N-1}.
\end{equation}
Equivalently, since $v$ is radial,
\begin{equation}\label{shapeQrad}
\frac{\partial}{\partial r}Q_p^\sharp(\alpha,r)
=
(p-1)N\omega_N r^{N-1}|v'(r)|^p .
\end{equation}
\end{proposition}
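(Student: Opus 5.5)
The plan is to use the scaling identity \eqref{Qris} to turn the $r$-derivative into an $\alpha$-derivative on the unit ball. The $\alpha$-derivative is already given by Proposition \ref{propa}, item (\ref{(ab)}). A Pohozaev-type identity for the radial problem will then convert the resulting volume integrals into the boundary term $|v'(r)|^p$.

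\emph{Step 1 (strict monotonicity).} Let $r_1<r_2$ with $(\alpha,r_2)\in D$. Proposition \ref{prop}, item (\ref{(c)}), gives $Q_p^\sharp(\alpha,r_1)\le Q_p^\sharp(\alpha,r_2)$. For strictness, extend the torsion function $v_1$ of $B_{r_1}$ by zero to $B_{r_2}$. If equality held, $v_1$ would be a maximizer of $F_\alpha$ on $B_{r_2}$. By uniqueness (item (\ref{(b)}) of Proposition \ref{prop}) it would then solve \eqref{veq.0} in $B_{r_2}$. This is impossible, because on the annulus $B_{r_2}\setminus B_{r_1}$ the function vanishes while $-\Delta_p v_1=1$ must hold there. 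Strict monotonicity also follows from Step 3, since $v'(r)\neq0$: integrating the radial equation gives $|v'|^{p-2}v'(r)\,r^{N-1}=-\int_0^r(\alpha v^{p-1}+1)\rho^{N-1}\,\di\rho$. This is negative when $\alpha\ge0$. When $\alpha<0$ it is still negative, because $v$ is the maximizer and by a comparison argument $\alpha v^{p-1}+1>0$.

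\emph{Step 2 (differentiating the scaling law).} Write $\gamma=\frac{Np-N+p}{p-1}$ and set $q(\beta)=Q_p^\sharp(\beta,1)$. By Proposition \ref{propa}, item (\ref{(ab)}), $q'(\beta)=\int_{B_1}v_{1,\beta}^p\,\di x$. Differentiating \eqref{Qris} gives
\[
\partial_r Q_p^\sharp(\alpha,r)=\gamma r^{\gamma-1}q(\alpha r^p)+p\alpha r^{\gamma+p-1}q'(\alpha r^p).
\]
Undoing the scaling ($v(x)=r^{p/(p-1)}v_{1,\alpha r^p}(x/r)$), this becomes
\[
\partial_r Q_p^\sharp(\alpha,r)=\frac1r\Bigl(\gamma(p-1)\int_{B_r}v\,\di x+p\alpha\int_{B_r}v^p\,\di x\Bigr).
\]

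\emph{Step 3 (Pohozaev identity).} This is the main obstacle. I will multiply the equation \eqref{veq.0} by $x\cdot\nabla v$ and integrate over $B_r$, working on the radial ODE, where $v$ is $C^1$ up to $\rho=r$. Testing with $\rho v'$ and integrating by parts yields
\[
\Bigl(\frac{N}{p}-1\Bigr)\int|\nabla v|^p\,\di x+\frac{p-1}{p}\,r\int_{\partial B_r}|\nabla v|^p\,\di\mathcal H^{N-1}=\frac{N\alpha}{p}\int v^p\,\di x+N\int v\,\di x.
\]
Testing with $v$ itself gives $\int|\nabla v|^p=\alpha\int v^p+\int v$. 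Substituting and collecting terms, the boundary term is $\frac{r}{p-1}\cdot$ the quantity $\gamma(p-1)\int v+p\alpha\int v^p$ divided by $(p-1)$. The coefficients should check out as $(N+p-N/p)\cdot\frac{p}{p-1}=\gamma(p-1)\cdot\frac{p}{p-1}$ and $\alpha p$. Combining this with Step 2 gives \eqref{shapeQ}, and \eqref{shapeQrad} follows because $|\nabla v|$ is constant on $\partial B_r$. The delicate points are the regularity needed to justify the Pohozaev computation near $\rho=0$ and at $\rho=r$, which is handled by the one-dimensional ODE structure, and the careful bookkeeping of constants.
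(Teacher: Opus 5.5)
Your proposal is correct and follows the paper's proof essentially step for step: you differentiate the scaling law \eqref{Qris} using Proposition~\ref{propa}(\ref{(ab)}), then convert $(Np-N+p)\int_{B_r}v+p\alpha\int_{B_r}v^p$ into the boundary term via the Pohozaev identity plus testing with $v$; the differences are minor (you get strictness from uniqueness of the maximizer or from integrating the radial ODE instead of the Hopf lemma, and you justify Pohozaev through the radial ODE where the paper simply quotes it). One caution: the coefficient check written at the end of your Step 3 is garbled, but combining your two (correct) identities does give $(p-1)\,r\int_{\partial B_r}|\nabla v|^p\,\di\mathcal H^{N-1}=(Np-N+p)\int_{B_r}v\,\di x+p\alpha\int_{B_r}v^p\,\di x$, which is exactly what Step 2 requires.
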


\begin{proof}
Let us fix $(\alpha,r)$ in the domain of definition of $Q_p^\sharp$. We denote by
$v=v_{\alpha,r}$ the solution of
\[
\left\{
\begin{array}{rcll}
-\Delta_p v & = & \alpha v^{p-1}+1 & \text{in } B_r,\\[3mm]
v & = & 0 & \text{on } \partial B_r.
\end{array}
\right.
\]
Since the problem is invariant under rotations and the solution is unique,
the generalized torsion function $v$
is a radial function.

We first recall the scaling of $Q_p^\sharp$. Let
\[
\beta=\alpha r^p
\]
and let $u=u_\beta$ be the solution of
\[
\left\{
\begin{array}{rcll}
-\Delta_p u & = & \beta u^{p-1}+1 & \text{in } B_1,\\[3mm]
u & = & 0 & \text{on } \partial B_1.
\end{array}
\right.
\]
Then
\[
v(x)=r^{\frac p{p-1}}u\left(\frac{x}{r}\right),
\]
and hence
\begin{equation}\label{scalingQ}
Q_p^\sharp(\alpha,r)
=
r^{\frac{Np-N+p}{p-1}}Q_p^\sharp(\alpha r^p,1).
\end{equation}
Set
\[
a=\frac{Np-N+p}{p-1}.
\]
By Proposition \ref{propa}, applied to the ball $B_1$, the map
$\beta\mapsto Q_p^\sharp(\beta,1)$ is differentiable and
\[
\frac{\partial}{\partial\beta}Q_p^\sharp(\beta,1)
=
\int_{B_1}u_\beta^p\,\di x.
\]
Differentiating \eqref{scalingQ} with respect to $r$ gives
\begin{align}
\frac{\partial}{\partial r}Q_p^\sharp(\alpha,r)
&=
a r^{a-1}Q_p^\sharp(\beta,1)
+
p\alpha r^{a+p-1}
\frac{\partial}{\partial\beta}Q_p^\sharp(\beta,1) \notag\\
&=
\frac{a}{r}Q_p^\sharp(\alpha,r)
+
\frac{p\alpha}{r}\int_{B_r}v^p\,\di x. \label{derivative_scaling}
\end{align}
Since
\[
Q_p^\sharp(\alpha,r)=(p-1)\int_{B_r}v\,\di x,
\]
we obtain
\begin{equation}\label{derivative_intermediate}
\frac{\partial}{\partial r}Q_p^\sharp(\alpha,r)
=
\frac1r
\left[
(Np-N+p)\int_{B_r}v\,\di x
+
p\alpha\int_{B_r}v^p\,\di x
\right].
\end{equation}

It remains to identify the right-hand side with the boundary term. To this aim, we use the Pohozaev identity for
\[
-\Delta_p v=\alpha v^{p-1}+1.
\]
Since $v=0$ on $\partial B_r$, we have
\begin{equation}\label{pohozaev}
\frac{N-p}{p}\int_{B_r}|\nabla v|^p\,\di x
-
N\int_{B_r}\left(\frac{\alpha}{p}v^p+v\right)\,\di x
+
\frac{p-1}{p}r\int_{\partial B_r}|\nabla v|^p\,\di\mathcal H^{N-1}
=0.
\end{equation}
On the other hand, testing the equation with $v$ gives
\begin{equation}\label{testv}
\int_{B_r}|\nabla v|^p\,\di x
=
\alpha\int_{B_r}v^p\,\di x
+
\int_{B_r}v\,\di x.
\end{equation}
Combining \eqref{pohozaev} and \eqref{testv}, we get
\[
(p-1)r\int_{\partial B_r}|\nabla v|^p\,\di\mathcal H^{N-1}
=
p\alpha\int_{B_r}v^p\,\di x
+
(Np-N+p)\int_{B_r}v\,\di x.
\]
Substituting this identity into \eqref{derivative_intermediate} yields
\[
\frac{\partial}{\partial r}Q_p^\sharp(\alpha,r)
=
(p-1)\int_{\partial B_r}|\nabla v|^p\,\di\mathcal H^{N-1},
\]
which is \eqref{shapeQ}.

Finally, since $v$ is positive in $B_r$ and vanishes on $\partial B_r$, the Hopf boundary lemma gives
\[
|\nabla v|>0 \quad \text{on } \partial B_r.
\]
Therefore
\[
\frac{\partial}{\partial r}Q_p^\sharp(\alpha,r)>0,
\]
and $Q_p^\sharp(\alpha,r)$ is strictly increasing with respect to $r$.
\end{proof}

\begin{remark}
Formula \eqref{shapeQ} is the 
shape derivative of
$Q_p(\alpha,\Omega)$ along the family of balls. In particular, it implies
\[
\frac{\partial}{\partial r}Q_p^\sharp(\alpha,r)>0
\]
whenever $Q_p^\sharp(\alpha,r)$ is finite.
\end{remark}

An immediate consequence of the above results is the following proposition.

\begin{proposition}\label{prad}
For any open set $\Omega\subset \R^N$ 
with 
finite measure, and for every
$-\infty<\alpha<\lambda_1(\Omega)$, 
there exists a unique
$r(\alpha)>0$ such that
\begin{equation}
\label{cnd}
Q_p(\alpha,\Omega)=Q_p(\alpha,B_{r(\alpha)}).
\end{equation}
Moreover,
\[
B_{r(\alpha)}\subset\Omega^\sharp .
\]
Finally, if $\alpha>0$, then
\[
r(\alpha)<\bar r,
\]
where $\bar r$ is defined by 
$\lambda_1(B_{\bar r})=\alpha$.
\end{proposition}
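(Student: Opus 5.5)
The plan is an intermediate value argument for the continuous, strictly increasing map $r\mapsto Q_p^\sharp(\alpha,r)$, with Proposition \ref{prop}, item (\ref{(d)}), providing the upper bound. Fix $\alpha\in(-\infty,\lambda_1(\Omega))$. By Faber--Krahn (Theorem \ref{KR}) we have $\alpha<\lambda_1(\Omega)\le\lambda_1(\Omega^\sharp)$. Hence, if $R$ denotes the radius of $\Omega^\sharp$, the pair $(\alpha,R)$ belongs to the domain $D$, and in the case $\alpha>0$ we get $R<\bar r$. Proposition \ref{prop}, item (\ref{(d)}), then gives
\[
0<Q_p(\alpha,\Omega)\le Q_p^\sharp(\alpha,R)<+\infty .
\]
Positivity holds because $v\ge0$, $v\not\equiv0$, together with \eqref{Q}. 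Alternatively, it suffices to test $F_\alpha$ on $tw$ with $w\ge0$ and $t$ small.

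Next, I study $\phi(r):=Q_p^\sharp(\alpha,r)$ on $(0,R]$. By Proposition \ref{derQ}, $\phi$ is differentiable, hence continuous, and strictly increasing. For the behaviour as $r\to0^+$, I use the scaling law \eqref{Qris}: $\phi(r)=r^{a}Q_p^\sharp(\alpha r^p,1)$ with $a=\frac{Np-N+p}{p-1}>0$. Since $\alpha r^p\to0$ and $Q_p^\sharp(\cdot,1)$ is increasing, $Q_p^\sharp(\alpha r^p,1)$ stays bounded for small $r$; for instance it is at most $Q_p^\sharp(\lambda_1(B_1)/2,1)$. Therefore $\phi(r)\to0$ as $r\to0^+$.

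By the intermediate value theorem, there exists $r(\alpha)\in(0,R]$ with $\phi(r(\alpha))=Q_p(\alpha,\Omega)$. This $r(\alpha)$ is unique within the whole interval of definition of $\phi$, and not only within $(0,R]$, because $\phi$ is strictly increasing there. It follows that $r(\alpha)\le R$, that is, $B_{r(\alpha)}\subset\Omega^\sharp$. When $\alpha>0$, we obtain $r(\alpha)\le R<\bar r$, the strict inequality $R<\bar r$ being a consequence of $\alpha<\lambda_1(\Omega^\sharp)=\lambda_1(B_R)$ and of the strict monotonicity of $\lambda_1(B_r)$ in $r$ given by \eqref{rescale}.

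The main point requiring care is the behaviour of $\phi$ near $0$, since the propositions stated earlier only cover the upper endpoint. The scaling argument above handles this without further estimates. Continuity of $\phi$ is supplied by the differentiability established in Proposition \ref{derQ}.
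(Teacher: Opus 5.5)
Your proof is correct, but it is organized differently from the paper's.

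\textbf{The paper's route.} It first shows that $r\mapsto Q_p^\sharp(\alpha,r)$ is a bijection of its whole interval of definition onto $(0,+\infty)$. This uses strict monotonicity from Proposition \ref{derQ}, the limit $0$ as $r\to0^+$ by scaling, and the blow-up at the upper endpoint from Proposition \ref{finite} ($r\to+\infty$ when $\alpha\le0$, $r\to\bar r^-$ when $\alpha>0$). Only afterwards does it prove $r(\alpha)\le R$, splitting into the case $\alpha<\lambda_1(B_R)$, handled by Proposition \ref{prop}, item (\ref{(d)}), and the case $\alpha\ge\lambda_1(B_R)$.

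\textbf{Your route.} You invoke Faber--Krahn at the outset to get $\alpha<\lambda_1(\Omega^\sharp)$. Then Proposition \ref{prop}, item (\ref{(d)}), supplies a finite value $Q_p^\sharp(\alpha,R)\ge Q_p(\alpha,\Omega)$ at the right endpoint. The intermediate value theorem on $(0,R]$ then gives existence and the inclusion $B_{r(\alpha)}\subset\Omega^\sharp$ in a single step.

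\textbf{What each buys.} Your version does not need Proposition \ref{finite} at all, neither the test-function constructions nor the blow-up as $r\to\bar r^-$. It also removes the case distinction: your Faber--Krahn observation shows that the paper's second case $\alpha\ge\lambda_1(B_R)$ is in fact empty. The paper's route gives in exchange the $\Omega$-independent fact that every value in $(0,+\infty)$ is attained by exactly one ball. That is slightly more than this proposition requires.

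\textbf{A small addition.} For uniqueness among all $r>0$, note that outside the interval of definition $Q_p^\sharp(\alpha,r)=+\infty$ by Proposition \ref{prop}, item (\ref{(a)}). So no other radius can match the finite value $Q_p(\alpha,\Omega)$.
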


\begin{proof}
Let us first prove existence and uniqueness. 
By Proposition \ref{derQ},
the map
\[
r\mapsto Q_p^\sharp(\alpha,r)
\]
is strictly increasing on its interval of definition.

Moreover, by the scaling formula \eqref{Qris},
\[
Q_p^\sharp(\alpha,r)
=
r^{\frac{Np-N+p}{p-1}}Q_p^\sharp(\alpha r^p,1).
\]
It follows that
\[
\lim_{r\to0^+}Q_p^\sharp(\alpha,r)=0.
\]
Indeed, $\alpha r^p \to 0$ 
and 
$Q_p^\sharp(\beta,1)$ remains finite
for $\beta$ near $0$.

If $\alpha\le 0$, Proposition \ref{finite}
gives
\[
\lim_{r\to+\infty}Q_p^\sharp(\alpha,r)=+\infty.
\]
Hence $Q_p^\sharp(\alpha,\cdot)$
maps 
$(0,+\infty)$ 
onto
$(0,+\infty)$.

If $\alpha>0$, then $Q_p^\sharp(\alpha,r)$ is finite  for
\[
0<r<\bar r,
\qquad
\lambda_1(B_{\bar r})=\alpha,
\]
and Proposition \ref{finite} gives
\[
\lim_{r\to\bar r^-}Q_p^\sharp(\alpha,r)=+\infty.
\]
Hence 
$Q_p^\sharp(\alpha,\cdot)$ 
maps 
$(0,\bar r)$ 
onto
$(0,+\infty)$.
Since 
$Q_p(\alpha,\Omega)\in(0,+\infty)$, 
there exists a unique
$r(\alpha)$ such that \eqref{cnd} holds.

It remains to prove that
$B_{r(\alpha)}\subset\Omega^\sharp$. 
Let
$\Omega^\sharp=B_R$.
If
\[
\alpha<\lambda_1(B_R)=\lambda_1(\Omega^\sharp),
\]
then Proposition \ref{prop}, item \ref{(d)}, gives
\[
Q_p(\alpha,\Omega)\le Q_p(\alpha,\Omega^\sharp)
=
Q_p^\sharp(\alpha,R).
\]
Since $Q_p^\sharp(\alpha,\cdot)$ is strictly increasing, from
\[
Q_p^\sharp(\alpha,r(\alpha))=Q_p(\alpha,\Omega)
\]
we deduce
\[
r(\alpha)\le R.
\]
On the other hand, if
\[
\alpha\ge \lambda_1(B_R),
\]
then necessarily $\alpha>0$. Let $\bar r$ 
be such that
\[
\lambda_1(B_{\bar r})=\alpha.
\]
Since $r\mapsto\lambda_1(B_r)$ 
is strictly decreasing, the inequality
\[
\alpha\ge \lambda_1(B_R)
\]
implies
\[
\bar r\le R.
\]
Moreover, by the finiteness of 
$Q_p^\sharp(\alpha,r(\alpha))$, we have
\[
r(\alpha)<\bar r.
\]
Therefore
\[
r(\alpha)<\bar r \le R.
\]
In both cases, $r(\alpha)\le R$, namely
\[
B_{r(\alpha)}\subset\Omega^\sharp .
\]
\end{proof}

\section{Comparison results}\label{s3}

In this section we show that one can use standard symmetrization arguments which go back to Talenti results \cite{T}, \cite{T2}, in order to prove a comparison result which allows us to estimate the generalized torsion function introduced in Section \ref{s2}. Let us observe that similar results have been obtained, for example, in \cite{C1}, \cite{C2} when $p=2$, or in \cite{AFT} when $p>1$.
\begin{theorem}\label{comp}
For a fixed $\alpha\in(-\infty,\lambda_1(\Omega))$, let $v$ be the solution to problem \eqref{veq.0} and let $r>0$ be such that
$Q_p(\alpha,\Omega)= Q_p^\sharp(\alpha,r)$. If $\bar v$ is the solution to problem
\begin{equation} \label{veq}
\left\{
\begin{array}
[c]{lll}%
-\dive( |D\bar v|^{p-2}D\bar v)=\alpha  |\bar v|^{p-2}\bar v+1 & & \text{in }
B_r,\\
\\
\bar v=0 & & \text{on }\partial B_r,
\end{array}
\right. %
\end{equation}
then, if $\max\{1,p-1\}\le m<\infty$, for every $s\le |\Omega|$ it holds
\begin{equation}\label{qq}
\int_0^{s}\bigl(v^*(\sigma)\bigr)^m\, \di \sigma \le \int_0^{s}\bigl(\bar v^*(\sigma)\bigr)^m\, \di \sigma,
\end{equation}
$\bar v^*(s)$ is extended to 0 for
$s>|B_r|$.
\end{theorem}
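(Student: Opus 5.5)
Our plan is to run the standard Talenti argument on the level sets of $v$, which yields a differential inequality for $V(s)=\int_0^s v^*(\sigma)\,\di\sigma$ with equality for $\bar V(s)=\int_0^s \bar v^*(\sigma)\,\di\sigma$. We then compare $V$ and $\bar V$ by an ODE comparison, using the normalization $Q_p(\alpha,\Omega)=Q_p^\sharp(\alpha,r)$ and \eqref{Q}, i.e. $V(|\Omega|)=\bar V(|B_r|)$, as the boundary condition. Finally we pass from $m=1$ to general $m$.

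\emph{Step 1 (Talenti inequality).} Let $\mu$ be the distribution function of $v\ge0$. We test \eqref{veq.0} with $(v-t)_+$ truncations, differentiate in $t$, and combine with the isoperimetric inequality, the coarea formula and H\"older's inequality, as in \cite{T2}. This gives, for a.e. $t$,
\[
N\omega_N^{1/N}\mu(t)^{1-\frac1N}\le(-\mu'(t))^{\frac{p-1}{p}}\Bigl(\int_{\{v>t\}}(\alpha v^{p-1}+1)\,\di x\Bigr)^{\frac1p}.
\]
In the rearranged variable $s$ this reads
\[
-\frac{\di v^*}{\di s}\le \bigl(N\omega_N^{1/N}s^{1-\frac1N}\bigr)^{-\frac{p}{p-1}}\Bigl(\alpha\int_0^s (v^*)^{p-1}\di\sigma+s\Bigr)^{\frac1{p-1}},
\]
with equality for $\bar v^*$ on $(0,|B_r|)$. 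For $\alpha\le0$ the right-hand side is at most the same expression with the integral replaced by its value for any larger competitor, which makes the comparison monotone. For $\alpha>0$ the term $\int_0^s (v^*)^{p-1}$ enters with a positive sign, so the inequality is not of the simple form $-v^{*\prime}\le$ (known function). This is where the restriction $m\ge p-1$ enters.

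\emph{Step 2 (comparison for $m=p-1$ and $m=1$).} We argue by contradiction. Suppose $\int_0^s (v^*)^{m}>\int_0^s(\bar v^*)^{m}$ for some $s$. Take a maximal interval where this strict inequality holds. Its endpoints satisfy one of two conditions: either the integrals are equal there, or the endpoint is $s=|\Omega|$, where the mass condition $\int_0^{|\Omega|}v^*=\int_0^{|B_r|}\bar v^*$ is used. On that interval we integrate the differential inequality of Step 1, for instance by multiplying by a suitable weight and integrating by parts. This produces the pointwise ordering $v^*\le\bar v^*$ near the right endpoint, contradicting the assumption. The standard way to organize this, following \cite{AFT} and the $p=2$ case in \cite{C1,C2}, is:
\begin{itemize}
\item first prove the pointwise bound $v^*(s)\le \bar v^*(s-|\Omega|+|B_r|)$-type estimates, or directly the comparison of $\int_0^s(v^*)^{p-1}$;
\item then derive the inequality for $m=1$ from the equality of total masses;
\item use the Chong--Rice characterization, Proposition \ref{relazione}, to move between exponents.
\end{itemize}

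\emph{Step 3 (all $m\ge\max\{1,p-1\}$).} Once \eqref{qq} is known for the minimal admissible exponent $m_0=\max\{1,p-1\}$, we apply Proposition \ref{relazione} to $f=(v^*)^{m_0}$, $g=(\bar v^*)^{m_0}$ with $F(\tau)=\tau^{m/m_0}$, truncated to be Lipschitz, which is convex since $m\ge m_0$. Applying this on each subinterval $(0,s)$ (restricting the rearrangements) gives \eqref{qq} for every $s$.

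The main obstacle is Step 2 for $\alpha>0$. There the source term depends on $v$ itself, and the only global information is the equality of total masses $\int v=\int\bar v$ on domains of different measure. We have to show that this mass condition prevents a crossing of the partial integrals, handling separately the region $s\in(|B_r|,|\Omega|]$ where $\bar v^*=0$ (by Proposition \ref{prad}, $|B_r|\le|\Omega|$).
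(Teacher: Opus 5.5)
\textbf{There is a genuine gap in Step 2.} Step 2 is the entire content of the theorem, and it is not carried out. Worse, the mechanism you describe fails in exactly the case you call the main obstacle. For $\alpha>0$, on an interval where $\int_0^s(v^*)^{p-1}>\int_0^s(\bar v^*)^{p-1}$, the Talenti inequality bounds $-(v^*)'$ by something \emph{larger} than the value of $-(\bar v^*)'$. Integrating it over a maximal interval therefore gives no ordering of $v^*$ and $\bar v^*$ near the endpoint.

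The missing idea is variational. For $\alpha\ge0$ the right-hand side of the Talenti inequality is nondecreasing in the unknown. Hence $w=\max\{v^*,\bar v^*\}$ on $(0,\bar s)$, $w=\bar v^*$ afterwards, still satisfies it; here $\bar s$ is the last point where $v^*=\bar v^*$, which exists by the mass equality. Multiplying by $-w'$ and integrating by parts gives $\int_0^{|B_r|}(N\omega_N^{1/N}s^{1-1/N}|w'|)^p\,\di s\le\alpha\int_0^{|B_r|}w^p\,\di s+\int_0^{|B_r|}w\,\di s$. Inserting $\tilde w(x)=w(\omega_N|x|^N)$ into the supremum defining $Q_p^\sharp(\alpha,r)$ then yields $Q_p^\sharp(\alpha,r)\ge(p-1)\int_0^{|B_r|}w\,\di s\ge(p-1)\int_0^{|B_r|}\bar v^*\,\di s=Q_p^\sharp(\alpha,r)$. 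Uniqueness of the maximizer (Proposition \ref{prop}(b)) forces $w=\bar v^*$, i.e.\ $v^*\le\bar v^*$ before $\bar s$ and $v^*\ge\bar v^*$ after. The mass equality then gives \eqref{qq} for $m=1$, and hence for all $m\ge1$ by Proposition \ref{relazione}.

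\textbf{The case $\alpha<0$.} You have placed the restriction $m\ge p-1$ in the wrong case, and your Step 1 remark has the sign reversed: for $\alpha<0$, a larger value of $\int_0^s(v^*)^{p-1}$ makes the right-hand side \emph{smaller}. That is exactly what makes $\alpha<0$ amenable to comparison. The function $\Psi(s)=\int_0^s((v^*)^{p-1}-(\bar v^*)^{p-1})\,\di\sigma$ has no positive interior local maximum, because near such a point $v^*-\bar v^*$ is strictly increasing and vanishes there.

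The remaining case $\Psi(|\Omega|)>0$ is the real difficulty, because the mass condition lives at exponent $1$ while $\Psi$ lives at exponent $p-1$. Your sketch does not address this.
\begin{itemize}
\item For $p\le2$, one uses $\Psi\le0$ on $[0,\tilde s]$, which gives the $m=1$ comparison there since $t\mapsto t^{1/(p-1)}$ is convex. This is combined with $v^*\ge\bar v^*$ on $(\tilde s,|\Omega|]$ and the mass equality.
\item For $p>2$, one uses $a-b\ge a^{2-p}(a^{p-1}-b^{p-1})/(p-1)$ and integrates by parts against $\Psi\le\Psi(|\Omega|)$. This gives $\int_0^{|\Omega|}(v^*-\bar v^*)\,\di\sigma\ge (v^*(0))^{2-p}\Psi(|\Omega|)/(p-1)>0$, contradicting the mass equality.
\end{itemize}
For $p>2$ only $\Psi\le0$ is obtained, not the $m=1$ comparison. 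This is why $m\ge p-1$ is needed when $\alpha<0$, whereas for $\alpha\ge0$ every $m\ge1$ works.
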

\begin{remark}\label{REM}
As it will be clear in the proof, when $\alpha\ge0$, inequality \eqref{qq} holds true for every $1\le m<\infty$.
\end{remark}
\begin{proof}[Proof of Theorem \ref{comp}]
By standard arguments (see, for example, \cite{T2}) we can verify that for $v^*$ the following inequality holds true
\begin{equation}\label{uprob}
\bigl((-v^*)'(s)\bigr)^{p-1}
\le \frac\alpha{N^p\omega_N^{\frac pN}s^{p-\frac pN}} \int_0^s\bigl(v^*(\sigma)\bigr)^{p-1} \,\di\sigma +\frac1{N^p\omega_N^{\frac pN}s^{p-1-\frac pN}}\,, \quad\hbox{ a.e. in }
(0,|\Omega|),
\end{equation}
while $\bar v^*$ satisfies
\begin{equation}\label{vprob}
\bigl((-\bar v^*)'(s)\bigr)^{p-1}
= \frac\alpha{N^p\omega_N^{\frac pN}s^{p-\frac pN}} \int_0^s\bigl(\bar v^*(\sigma)\bigr)^{p-1} \,\di\sigma +\frac1{N^p\omega_N^{\frac pN}s^{p-1-\frac pN}}\,, \quad\hbox{ a.e. in }
(0,|B_r|).
\end{equation}

Let us observe that, in view of Proposition \ref{prop}, items (\ref{(c)}), (\ref{(d)}), it holds
\begin{equation}\label{compH}
|B_r| \le 
|\Omega|.
\end{equation}
Furthermore,
the equality $Q_p(\alpha,\Omega)= Q_p^\sharp(\alpha,r)$ implies
\begin{equation}\label{obs}
\int_0^{|\Omega|}v^*(\sigma)\,\di\sigma=\int_0^{|B_r|}\bar v^*(\sigma) \,\di\sigma,
\end{equation}
then it is well defined
$$\bar s=\sup\{s>0:v^*(s)=\bar v^*(s)\}\le|B_r|.
$$
When $\alpha\ge0$, we put
\begin{equation}\label{v}
w(s)=\left\{
   \begin{array}{ll}
\max\{v^*(s),\bar v^*(s)\},&0< s<\bar s\\
&\\
\bar v^*(s),&\text{otherwise}
\end{array},
 \right.
\end{equation}
then, from \eqref{uprob} and \eqref{vprob} 
we know that for a.e. $s\in (0,|B_r|)$,
\begin{equation}\label{wprob}
 \left\{
   \begin{array}{lll}\displaystyle
N^p\omega_N^{\frac pN}s^{p-\frac pN}\bigl(-w'(s)\bigr)^{p-1}
\le \alpha \int_0^s\bigl(w(\sigma)\bigr)^{p-1} \,\di\sigma +s\,,\\
&&\\
w(s)\ge\bar v^*(s)\,.
\end{array}
 \right.
\end{equation}
Multiplying the first inequality by $-w'(s)$ and integrating, we have
\begin{equation}\label{intW}
\int_0^{|B_r|}
\Bigl(N\omega_N^{\frac 1N}s^{1-\frac 1N}|w'(s)|\Bigr)
^{p}\,\di s
\le
\alpha\int_0^{|B_r|} |w(s)|^p\,\di s +\int_0^{|B_r|}w(s)\,\di s\,.
\end{equation}
We can also use $\tilde w(x)=w(\omega_N|x|^N)$ as a test function in $Q^\sharp_{p}(\alpha,r)$, to get
\begin{align*}
\displaystyle
Q^\sharp_{p}(\alpha,r)&\ge-\int_{B_r}|\tilde w'(x)|^{p}
\,\di x+\alpha
\int_{B_r}|\tilde w(x)|^{p}
\,\di x +p\int_{B_r} \tilde w(x)
\,\di x
\\
\\&=-\int_0^{|B_r|}
\Bigl(N\omega_N^{\frac 1N}s^{1-\frac 1N}|w'(s)|\Bigr)
^{p}\,\di s
+
\alpha\int_0^{|B_r|} |w(s)|^p\,\di s +p\int_0^{|B_r|}w(s)\,\di s\\
\\
\displaystyle
&\ge (p-1)\int_0^{|B_r|}w(s)\,\di s\ge
(p-1)\int_0^{|B_r|}\bar v^*(s)\,\di s
=
Q^\sharp_{p}(\alpha,r).
\end{align*}
By the characterization of the maximum we have $w=\bar v^*$, that is
\begin{equation*}
\left\{
   \begin{array}{lll}\displaystyle
\bar v^*(s)\ge v^*(s) \quad\text{in }(0,\bar s),\\
&&\\
v^*(s)\ge \bar v^*(s) \quad\text{otherwise}\,.
\end{array}
 \right.
\end{equation*}
Using again \eqref{obs}, we have
\begin{equation*}
\int_0^{s} v^*(\sigma)\,\di \sigma
\le
\int_0^{s} 
\bar v^*(\sigma)\,\di \sigma,\qquad s>0,
\end{equation*}
and, finally,  \eqref{qq} follows
by Proposition \ref{relazione}.

When $\alpha<0$, we consider the function
\begin{equation}\label{psi}
\Psi(s)=\int_0^{s}\left(\bigl(v^*(\sigma)\bigr)^{p-1}-\bigl(\bar v^*(\sigma)\bigr)^{p-1}\right)\, \di \sigma,\qquad0\le s\le|\Omega|.
\end{equation}
We claim that $\Psi$ cannot achieve a positive local maximum at any point $0<s<|\Omega|$. Let us observe that we can immediately exclude the points $|B_r|\le s\le|\Omega|$ because, as already observed, in those points $\Psi(s)$ is strictly increasing.

Suppose, by contradiction, that there exists $s_0$, with $0< s_0<|B_r|$, such that, for some $\delta>0$, it holds
\begin{equation}\label{aneg}
\Psi(s_0)=\max_{|s-s_0|<\delta}\Psi(s)>0,
\end{equation}
with $v^*(s_0)-\bar v^*(s_0)=0$.

From \eqref{uprob} and \eqref{vprob}, it follows, for a.e. $|s-s_0|<\delta$,
\begin{equation*}
(v^*)'(s)> (\bar v^*)'(s).
\end{equation*}
This means that the function $v^*(s)- \bar v^*(s)$ is strictly increasing, with $v^*(s_0)-\bar v^*(s_0)=0$, then \eqref{aneg} cannot hold.

We finally observe that there are two possibilities:

\begin{enumerate}[(\emph{\roman*})]
\item $\Psi(s)\le0$, $\forall s\in (0,|\Omega|)$;
\item $\Psi(s)>0$, for some $s\in (0,|\Omega|)$.
\end{enumerate}
In case \emph{(i)} there is nothing to prove, \eqref{qq} follows
by Proposition \ref{relazione}. 

In case \emph{(ii)} we have 
\begin{equation}\label{contra}
\max_{0\le s\le|\Omega|}\Psi(s)=\Psi(|\Omega|)>0,
\end{equation}
and there exists $\tilde s\in(0,|\Omega|)$ such that
\begin{equation}\label{tre}
\left\{
   \begin{array}{ll}\displaystyle
\Psi(s)\le0 &\quad\text{for }0\le s\le\tilde s,\\
&\\
\Psi(s)>0 &\quad\text{for }\tilde s< s\le|\Omega|,\\
&\\
v^*(s)\ge \bar v^*(s) &\quad\text{for }\tilde s< s\le|\Omega|\,.
\end{array}
 \right.
\end{equation}

If $1<p\le2$, we have $p-1\le1$, then the first and the third inequalities in \eqref{tre}, together with \eqref{obs} and Proposition \ref{relazione} imply \eqref{qq}.

If $p>2$, for $s<|\Omega|$, using the inequality $a-b\ge a^{2-p}(a^{p-1}-b^{p-1})/(p-1)$, $a,b>0$, we have:
\begin{align*}
\displaystyle\int_0^{s}\displaystyle\left(v^*(\sigma)-\bar v^*(\sigma)\right)\, &\di \sigma
\ge\int_0^{s}\left(\bigl(v^*(\sigma)\bigr)^{p-1}-\bigl(\bar v^*(\sigma)\bigr)^{p-1}\right)\frac{\bigl(v^*(\sigma)\bigr)^{2-p}}{p-1}\, \di \sigma=\\
\\
=&\displaystyle\frac{\bigl(v^*(s)\bigr)^{2-p}}{p-1}\Psi(s)-\int_0^{s}
\left(\frac{\bigl(v^*(\sigma)\bigr)^{2-p}}{p-1}\right)'\Psi(\sigma)\, \di \sigma\ge\\
\\
\ge&\displaystyle\frac{\bigl(v^*(s)\bigr)^{2-p}}{p-1}\Psi(s)-\left(\frac{\bigl(v^*(s)\bigr)^{2-p}}{p-1}-\frac{\bigl(v^*(0)\bigr)^{2-p}}{p-1}\right)\Psi(|\Omega|)=\\
\\
\ge&\displaystyle\frac{\bigl(v^*(0)\bigr)^{2-p}}{p-1}\Psi(|\Omega|)-\frac{\bigl(v^*(s)\bigr)^{2-p}}{p-1}\int_s^{|\Omega|}\left(\bigl(v^*(\sigma)\bigr)^{p-1}-\bigl(\bar v^*(\sigma)\bigr)^{p-1}\right)\, \di \sigma.
\end{align*}
The last term above vanishes as $s\rightarrow|\Omega|$ because
\begin{equation*}
\frac1{\bigl(v^*(s)\bigr)^{p-2}}\int_s^{|\Omega|}\left(\bigl(v^*(\sigma)\bigr)^{p-1}-\bigl(\bar v^*(\sigma)\bigr)^{p-1}\right)\, \di \sigma\le v^*(s)(|\Omega|-s).
\end{equation*}
Thus, as a consequence of \eqref{contra}, we have
\begin{equation*}
\int_0^{|\Omega|}\left(v^*(\sigma)-\bar v^*(\sigma)\right)\, \di \sigma
\ge\displaystyle\frac{\bigl(v^*(0)\bigr)^{2-p}}{p-1}\Psi(|\Omega|),
\end{equation*}
which contradicts \eqref{obs}, so \eqref{contra} cannot be verified and \eqref{qq} holds true for $m=p-1$. Proposition \ref{relazione} then applies.
\end{proof}

\section{Main result}\label{s4}

In this section we prove the main result of the paper which can be stated as follows.

\begin{theorem}\label{final}
For any $\alpha\in(-\infty,\lambda_1(\Omega))$ and for any open set $\Omega\subset\RR$ with finite measure, letting $B_{r(\alpha)}\subset\Omega^\sharp$ be the ball centered at the origin such
that $Q_p(\alpha,\Omega)= Q_p(\alpha,B_{r(\alpha)})$, we have
\begin{equation}\label{mainre}
\lambda_1(\Omega)\ge \lambda_1(B_{r(\alpha)}).
\end{equation}
\end{theorem}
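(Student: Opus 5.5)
The plan follows the strategy announced in the introduction. First show that $\alpha\mapsto r(\alpha)$ is non-increasing on $(-\infty,\lambda_1(\Omega))$. Then let $\alpha\to\lambda_1(\Omega)^-$ and use the blow-up of $Q_p$ at the eigenvalue.

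\emph{Step 1 (differentiability of $r(\alpha)$).} The map $r(\alpha)$ is defined implicitly by $G(\alpha,r):=Q_p^\sharp(\alpha,r)-Q_p(\alpha,\Omega)=0$. Proposition \ref{propa}, item (\ref{(ab)}), gives $\partial_\alpha Q_p(\alpha,\Omega)=\int_\Omega v^p$ and $\partial_\alpha Q_p^\sharp(\alpha,r)=\int_{B_r}\bar v^p$. Proposition \ref{derQ} gives $\partial_r Q_p^\sharp>0$. Continuity of these derivatives follows from the convergence \eqref{conve}, together with the scaling \eqref{Qris} for the ball. With these, the implicit function theorem yields
\[
r'(\alpha)=\frac{\int_\Omega v^p\,\di x-\int_{B_{r(\alpha)}}\bar v^p\,\di x}{\partial_r Q_p^\sharp(\alpha,r(\alpha))}.
\]
If differentiability is delicate, I would instead argue with difference quotients. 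Monotonicity of $Q_p$ in $\alpha$ and convexity of $\alpha\mapsto Q_p(\alpha,\cdot)$ (a supremum of affine functions of $\alpha$) give one-sided inequalities sufficient to compare $r(\alpha+\varepsilon)$ with $r(\alpha)$.

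\emph{Step 2 (sign).} Apply Theorem \ref{comp} with $m=p\ge\max\{1,p-1\}$ and $s=|\Omega|$. This gives $\int_\Omega v^p\le\int_{B_{r(\alpha)}}\bar v^p$, hence $r'(\alpha)\le0$. So $r$ is non-increasing, and $\lambda_1(B_{r(\alpha)})$ is non-decreasing in $\alpha$.

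\emph{Step 3 (limit).} Put $r_*=\lim_{\alpha\to\lambda_1(\Omega)^-}r(\alpha)$, which is at least $0$, and is at most $R$ by Proposition \ref{prad}. I claim $\lambda_1(B_{r_*})\le\lambda_1(\Omega)$, with $r_*>0$. Suppose instead $\lambda_1(B_{r_*})>\lambda_1(\Omega)$ (this includes the case $r_*=0$). Fix $\rho>r_*$ with $\lambda_1(B_\rho)>\lambda_1(\Omega)$. For $\alpha$ close to $\lambda_1(\Omega)$ we have $r(\alpha)\le\rho$, so Proposition \ref{derQ} gives
\[
Q_p(\alpha,\Omega)=Q_p^\sharp(\alpha,r(\alpha))\le Q_p^\sharp(\alpha,\rho).
\]
The right-hand side stays bounded as $\alpha\to\lambda_1(\Omega)$, since $\lambda_1(\Omega)<\lambda_1(B_\rho)$ and $Q_p^\sharp(\cdot,\rho)$ is continuous and increasing there. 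This contradicts Proposition \ref{propa}, item (\ref{(ad)}). Hence $\lambda_1(\Omega)\ge\lambda_1(B_{r_*})\ge\lambda_1(B_{r(\alpha)})$ for every $\alpha$, the last inequality because $r(\alpha)\ge r_*$ and \eqref{rescale} holds.

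The main obstacle is Step 1: the regularity of $r(\alpha)$, in particular continuity of $\alpha\mapsto\int_\Omega v_{(\alpha)}^p$ and of $\partial_rQ_p^\sharp$. I would avoid it through the difference-quotient version. Set $\varepsilon>0$ and suppose $r(\alpha+\varepsilon)>r(\alpha)$. By \eqref{FQa} applied on $\Omega$ and on the ball, the increments of $Q_p$ are squeezed between $\varepsilon\int v_{(\alpha)}^p$ and $\varepsilon\int v_{(\alpha+\varepsilon)}^p$. Combining these with Theorem \ref{comp} at $\alpha$ and $\alpha+\varepsilon$ should rule out any increase of $r$ up to $o(\varepsilon)$. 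A Dini-derivative argument then gives that $r$ is non-increasing.
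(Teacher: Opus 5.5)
Your proposal is correct and follows the paper's proof. You show that $r(\alpha)$ is non-increasing via the implicit function theorem, with the sign of $r'(\alpha)$ fixed by Theorem \ref{comp} at $m=p$, $s=|\Omega|$. You then pass to the limit $\alpha\to\lambda_1(\Omega)^-$ and reach a contradiction with Proposition \ref{propa}, item (\ref{(ad)}).

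Your Step 3 is slightly more careful than the paper's. The bound $Q_p(\alpha,\Omega)\le Q_p^\sharp(\alpha,\rho)\le Q_p^\sharp(\lambda_1(\Omega),\rho)<+\infty$ avoids the joint continuity of $Q_p^\sharp$ that the paper uses tacitly, and it also covers the case $r_*=0$. You also rightly flag the $C^1$ regularity that the paper's appeal to the implicit function theorem takes for granted.
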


\begin{remark} If in Theorem \ref{final} we put  $\alpha =0$, we obtain the inequality proved in \cite{KMCb}, when $p=2$, or in \cite{Br}, when $p>1$.
\end{remark}

Before giving the proof of Theorem \ref{final}, we prove the following result.
\begin{proposition}\label{monot}
For any $\alpha\in(-\infty,\lambda_1(\Omega))$, let $r(\alpha)>0$ be such that
$Q_p(\alpha,\Omega)= Q_p(\alpha,B_{r(\alpha)})$. The function $r(\alpha)$ is decreasing.
\end{proposition}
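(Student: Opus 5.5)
We prove that $r(\alpha)$ is decreasing by differentiating the defining identity $Q_p(\alpha,\Omega)=Q_p^\sharp(\alpha,r(\alpha))$ in $\alpha$ and comparing the two sides through Theorem \ref{comp}.

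\emph{Step 1: the derivative formula.} By Proposition \ref{derQ}, $\partial_r Q_p^\sharp>0$. The scaling formula \eqref{Qris}, together with Proposition \ref{propa}, item (\ref{(ab)}), shows that $Q_p^\sharp$ is $C^1$ in $(\alpha,r)$. This needs continuity of $\beta\mapsto\int_{B_1}u_\beta^p$, which follows from \eqref{conve}. By the implicit function theorem, $r(\alpha)$ is then differentiable. Differentiating the identity and using Proposition \ref{propa}, item (\ref{(ab)}), on both $\Omega$ and $B_{r(\alpha)}$ gives
\[
\int_\Omega v^p\,\di x=\int_{B_{r(\alpha)}}\bar v^p\,\di x+\frac{\partial Q_p^\sharp}{\partial r}(\alpha,r(\alpha))\,r'(\alpha),
\]
where $v$ is the generalized torsion function in $\Omega$ and $\bar v$ the one in $B_{r(\alpha)}$.

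\emph{Step 2: conclusion from the comparison.} Since $\partial_r Q_p^\sharp>0$, the sign of $r'(\alpha)$ equals the sign of $\int_\Omega v^p-\int_{B_{r(\alpha)}}\bar v^p$. Apply Theorem \ref{comp} with $m=p$, which is admissible because $p\ge\max\{1,p-1\}$, and $s=|\Omega|$. Using \eqref{inv}, this gives $\int_\Omega v^p\le\int_{B_{r(\alpha)}}\bar v^p$. Hence $r'(\alpha)\le0$ and $r$ is nonincreasing.

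\emph{Alternative avoiding differentiability of $r$.} Take $\alpha<\beta$ and integrate $\partial_\gamma$ of both sides from $\alpha$ to $\beta$. For every $\gamma$,
\[
\frac{\di}{\di\gamma}Q_p(\gamma,\Omega)\le \partial_\alpha Q_p^\sharp(\gamma,r(\gamma)).
\]
Suppose $r(\beta)>r(\alpha)$. Then one can compare $Q_p(\beta,\Omega)-Q_p(\alpha,\Omega)$ with $Q_p^\sharp(\beta,r(\alpha))-Q_p^\sharp(\alpha,r(\alpha))$, using that $\partial_\alpha Q_p^\sharp(\gamma,r)=\int_{B_r}\bar v^p$ is increasing in $r$ (by domain monotonicity of the torsion function, as in the comparison argument of Proposition \ref{propa}, item (\ref{(aa)})). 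A first-crossing argument then gives a contradiction. I would nonetheless present the implicit-function route.

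\emph{Main obstacle.} The delicate point is the regularity justification: joint continuity of $\partial_\alpha Q_p^\sharp$ in $(\alpha,r)$, so that the implicit function theorem applies. Via scaling this reduces to continuity in $\beta$ of $\int_{B_1}u_\beta^p$, which is exactly \eqref{conve}. The nonnegativity $v\ge0$ from Proposition \ref{prop} lets us write $|v|^p=v^p$ when invoking Theorem \ref{comp}.
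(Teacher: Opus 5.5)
Your proposal is correct and follows the paper's proof. You apply the implicit function theorem to $Q_p^\sharp(\alpha,r)-Q_p(\alpha,\Omega)=0$, note that the sign of $r'(\alpha)$ is that of $\int_\Omega v^p-\int_{B_{r(\alpha)}}\bar v^p$, and conclude that this is $\le 0$ by Theorem \ref{comp} with $m=p$ and $s=|\Omega|$. Your remark that the joint $C^1$ regularity needed for the implicit function theorem follows from the scaling formula and \eqref{conve} makes explicit a point the paper leaves implicit.
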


\begin{proof}
Using notation \eqref{Qrad}, $r(\alpha)$ is implicitly defined by the equality
\[Q_p^\sharp(\alpha,r)-Q_p(\alpha,\Omega)= 0.\]
Collecting Proposition \ref{propa} (\ref{(ab)}), Proposition \ref{prad}
and Proposition \ref{derQ}, and applying the implicit function theorem,
we get that $r(\alpha)$ is differentiable and 
\[
r'(\alpha)=-\frac{\frac\partial{\partial\alpha}(Q_p^\sharp(\alpha,r)-Q_p(\alpha,\Omega))}{\frac\partial{\partial r}Q_p^\sharp(\alpha,r)},
\]
where 
$\frac\partial{\partial r}Q_p^\sharp(\alpha,r)>0$ 
and
\[
\frac\partial{\partial \alpha}\bigl(Q_p^\sharp(\alpha,r)-Q_p(\alpha,\Omega)\bigr)=\int_{B_{r(\alpha)}} |\bar v(x)|^p\, \di x-
\int_\Omega |v(x)|^p\, \di x,
\]
where $v$ is the solution to \eqref{veq.0} in $\Omega$ and $\bar v$ is the solution to \eqref{veq} in $B_{r(\alpha)}$.
By Theorem \ref{comp} we have
$$
r'(\alpha)\le 0
$$
and the claim is proved.
\end{proof}

\begin{proof}[Proof of Theorem \ref{final}]
Let us observe that Proposition \ref{prad} implies that the ball
$B_{r(\alpha)}\subset\Omega^\sharp$ is well defined.
Since $\lambda_1(\Omega)>0$, in the limiting procedure
$\alpha\to\lambda_1(\Omega)^-$ we may assume $\alpha>0$.
For such values of $\alpha$, being $Q_p(\alpha,B_{r(\alpha)})$ finite,
Proposition \ref{prop} (\ref{(a)}) gives
\[
\alpha<\lambda_1(B_{r(\alpha)}).
\]
Equivalently,
\[
r(\alpha)<\bar r_\alpha,
\]
where $\bar r_\alpha>0$ is such that
\[
\lambda_1(B_{\bar r_\alpha})=\alpha.
\]

Hence, the monotonicity of $r(\alpha)$, proven in Proposition \ref{monot},
implies that the following limit exists
\[
\lim_{\alpha\rightarrow\lambda_1(\Omega)^{-}}r(\alpha)=\ell.
\]
We claim that $\ell\ge\tilde r$, where $\tilde r>0$ is such that
\[
\lambda_1(B_{\tilde r})=\lambda_1(\Omega).
\]
If, by contradiction,
\[
\ell<\tilde r ,
\]
then, taking into account that
\[
\lambda_1(B_{\ell})>\lambda_1(B_{\tilde r})=\lambda_1(\Omega),
\]
we would get
\[
\lim_{\alpha\rightarrow \lambda_1(\Omega)^{-}} Q_p(\alpha,\Omega)
=
\lim_{\alpha\rightarrow \lambda_1(\Omega)^{-}}
Q_p^\sharp(\alpha,r(\alpha))
=
Q_p^\sharp(\lambda_1(\Omega),\ell)<+\infty,
\]
in contrast with Proposition \ref{propa} (\ref{(ad)}). Therefore
\[
\lim_{\alpha\rightarrow\lambda_1(\Omega)^{-}}r(\alpha)\ge\tilde r.
\]
Since $r(\alpha)$ is decreasing, this implies
\[
r(\alpha)\ge\tilde r
\qquad\text{for every } \alpha<\lambda_1(\Omega).
\]
Finally, by the monotonicity of the first eigenvalue with respect to
set inclusion, we obtain
\[
\lambda_1(\Omega)=\lambda_1(B_{\tilde r})
\ge
\lambda_1(B_{r(\alpha)}).
\]
\end{proof}

\section*{Acknowledgement}

\noindent The research of the  second and third author was partially supported by Italian MIUR  through research projects   PRIN 2022: PRIN20229M52AS Partial differential equations and related geometric-functional inequalities and PRIN PNRR 2022 - P2022YFAJH - Linear and Nonlinear PDE's: New directions and Applications. The research of the  first author was partially supported by Italian MIUR  through research projects  PRIN PNRR 2022 - P2022YFAJH - Linear and Nonlinear PDE's: New directions and Applications.  The first, second and third authors are members of the Gruppo Nazionale per l'Analisi Matematica, la Probabilit\`a
e le loro Applicazioni (GNAMPA) of the Istituto Nazionale di Alta Matematica (INdAM).
The research of the last author was supported in part by NSF Grant DMS-2246817.


\bigskip

\begin{thebibliography}{20}


\bibitem{AFT}  {\sc A. Alvino, V. Ferone, G. Trombetti}, \emph{On the properties of some nonlinear eigenvalues}, SIAM J. Math. Anal. 29 (1998), 437--451.

\bibitem{ALT} {\sc A. Alvino, P.-L. Lions, G. Trombetti}, \emph{On optimization problems with prescribed rearrangements}, Nonlinear Anal. Theory Methods Appl. 13 (1989), 185--220.


\bibitem{Ba} {\sc C. Bandle}, \emph{Bounds for the solutions to boundary value problems}, J. Math. Anal. Appl.,  54 (1976), 706--716.

\bibitem{Bandle} {\sc C. Bandle},  Isoperimetric inequalities and applications, Monographs and Studies
in Mathematics, vol. 7, Pitman (Advanced Publishing Program), Boston, MA 1980.

\bibitem{BS} {\sc C. Bennett, R. Sharpley},  Interpolation of operators,  Pure and Applied Mathematics, vol. 129, Academic Press Inc., Boston, MA 1988.

 
%
\bibitem{BoccardoMurat} {\sc L. Boccardo, F. Murat}, \emph{Almost everywhere convergence of the gradients of solutions to elliptic and parabolic equations}, Nonlinear Anal. 19 (1992), 581--597.
%
%

\bibitem{Br}
{\sc L.~Brasco}, {\em On torsional rigidity and principal frequencies: an invitation to the {K}ohler-{J}obin rearrangement technique}, ESAIM: Control, Optimisation and Calculus of Variations, 20 (2014), 315--338.

\bibitem{BO} {\sc H. Brezis, L. Oswald}, \emph{Remarks on sublinear elliptic equations},
 Nonlinear Anal.  10  (1986),  55--64.

 \bibitem{BZ}
{\sc J. E. Brothers, W. P. Ziemer},
\emph{ Minimal rearrangements of Sobolev functions},
 Reine Angew. Math. 384 (1988), 153--179.

\bibitem{C1} {\sc G. Chiti}, \emph{An isoperimetric inequality for the eigenfunctions of linear second order elliptic operators}, Boll. Un. Mat. Ital. A(6) 1 (1982), 145--151. 
 
\bibitem{C2} {\sc G. Chiti}, \emph{A reverse H\"older inequality for the eigenfunctions of linear second order elliptic operators}, Z. Angew. Math. Phys. 33 (1982), 143--148. 

\bibitem{ChongRice} {\sc K.M. Chong,  N.M. Rice},
\emph{ Equimeasurable rearrangements of functions},
Queen's Papers in Pure and Applied Mathematics, No. 28. Queen's University, Kingston, ON, 1971. 

\bibitem{DS} {\sc J.I. D\'\i az, J.E. Sa\'a}, \emph{Existence et unicit\'e de solutions positives pour certaines \'equations elliptiques quasilin\'eaires}, C. R. Acad. Sci. Paris S\'er. I Math.  305  (1987),  521--524.

%
%

\bibitem{Fa}{\sc G. Faber}, \emph{Beweis, da{{\ss}} unter allen homogenen {Membranen} von gleicher {Fl{\"a}che} und gleicher {Spannung} die kreisf{\"o}rmige den tiefsten {Grundton} gibt}, Sitzungsber., Bayer. Akad. Wiss., Math.-Naturwiss. Kl., (1923), 169--172.

\bibitem{GMS} {\sc J. Garc\'{\i}a Meli\'{a}n, J. Sabina de Lis}, \emph{On the perturbation of eigenvalues for the {$p$}-{L}aplacian}, C. R. Acad. Sci. Paris S\'{e}r. I Math. 332 (2001),
893--898.
     
\bibitem{HL} {\sc O. Herscovici, G.V. Livshyts}, \emph{Kohler-Jobin meets Ehrhard: the sharp lower bound for the Gaussian principal frequency while the Gaussian torsional rigidity is fixed, via rearrangements},
 Proc. Amer. Math. Soc.  152  (2024), 4437--4450.
 
\bibitem{KMCb} {\sc M.-T. Kohler-Jobin},
     \emph{D\'{e}monstration de l'in\'{e}galit\'{e} isop\'{e}rim\'{e}trique {$P\lambda^{2}\geq \pi j^{4}_{0}/2$}, con\-jectur\'{e}e par {P}\'{o}lya et
              {S}zeg\H{o}},
{C. R. Acad. Sci. Paris S\'{e}r. A-B} {281} (1975),
A119--A121.
		
\bibitem{KMCa} {\sc M.-T. Kohler-Jobin},
\emph{Une propri\'{e}t\'{e} de monotonie isop\'{e}rim\'{e}trique qui contient plusieurs th\'{e}or\`emes classiques},
 {C. R. Acad. Sci. Paris S\'{e}r. A-B} {284} ({1977}),
{A917--A920}.

\bibitem{K} {\sc M.-T. Kohler-Jobin}, \emph{Sur la premi\'ere fonction propre d'une membrane: une extension \`a $N$ dimensions de l'in\'egalit\'e isop\'erim\'etrique de Payne-Rayner}, Z. Angew. Math. Phys. 28 (1977), 1137--1140.

\bibitem{KMZa} {\sc M.-T. Kohler-Jobin},
\emph{Une m\'{e}thode de comparaison isop\'{e}rim\'{e}trique de fonctionnelles
de domaines de la physique math\'{e}matique. {I}. {U}ne
d\'{e}monstration de la conjecture iso\-p\'{e}rim\'{e}trique {$P\lambda^{2}\geq \pi j^{4}_{0}/2$} de {P}\'{o}lya et {S}zeg\H{o}},
 {Z. Angew. Math. Phys.} {29} (1978),
 {757--766}.
				
\bibitem{KMZb} {\sc M.-T. Kohler-Jobin},
\emph{Une m\'{e}thode de comparaison isop\'{e}rim\'{e}trique de fonctionnelles
de domaines de la physique math\'{e}matique. {II}. {C}as inhomog\`ene: une in\'{e}galit\'{e} isop\'{e}rim\'{e}trique entre la fr\'{e}quence
fondamentale d'une membrane et l'\'{e}nergie d'\'{e}quilibre d'un probl\`eme de {P}oisson},
 {Z. Angew. Math. Phys.} {29} (1978), 767--776.
		
\bibitem{KMZc} {\sc M.-T. Kohler-Jobin},
\emph{Isoperimetric monotonicity and isoperimetric inequalities of Payne-Rayner type for the first eigenfunction of the Helmholtz problem},
 {Z. Angew. Math. Phys.} {32} (1981), 625--646.

\bibitem{Kr}
{\sc E. Krahn}, 
\emph{\"Uber eine von Rayleigh formulierte Minimaleigenschaft des Kreises}, Math. Ann. 94 (1924), 97--100.



\bibitem{Lam}
{\sc P. D. Lamberti},
\emph{A differentiability result for the first eigenvalue of the $p$-Laplacian upon domain perturbation}, 
 Nonlinear Analysis and Applications, Kluwer Academic Publishers, Dordrecht, 2003, pp. 741--754.


\bibitem{Lin} {\sc P. Lindqvist},
\emph{On the equation {${\rm div}\,(|\nabla u|^{p-2}\nabla
u)+\lambda|u|^{p-2}u=0$}}, {Proc. Amer. Math. Soc.} 109 (1990), 157--164.

%
%
%
\bibitem{Po}{\sc G. P{\'o}lya},
\emph{Torsional rigidity, principal frequency, electrostatic capacity and symmetrization},
Q. Appl. Math., {6} (1948),
{267--277}.

\bibitem{PS}{\sc G. P{\'o}lya, G. Szeg{\H o}},
\textit{Isoperimetric inequalities in mathematical physics},
{Ann. Math. Stud.},
vol. {27},
Princeton University Press, Princeton, NJ, {1951}.

\bibitem{T} {\sc G. Talenti}, 
\emph{Elliptic equations and rearrangements}, Ann. Scuola Norm. Sup. Pisa Cl. Sci. (4) 3 (1976), 697--718.

\bibitem{T1} {\sc G. Talenti}, 
\emph{Nonlinear elliptic equations, rearrangements of functions and {O}rlicz spaces},
Ann. Mat. Pura Appl. (4) {120} (1979), 160--184.

\bibitem{T2} {\sc G. Talenti},
\emph{A weighted version of a rearrangement inequality},
Ann. Univ. Ferrara Sez. VII (N.S.) 43 (1997), 121--133.

\end{thebibliography}
\end{document}